
\documentclass[11pt]{article}

\usepackage[a4paper,left=3.5cm,right=3.5cm,top=3cm,bottom=3cm]{geometry}
\usepackage{graphicx}%
\usepackage{multirow}%
\usepackage{amsmath,amssymb,amsfonts}%
\usepackage{mathrsfs}%
\usepackage[title]{appendix}%
\usepackage{xcolor}%
\usepackage{textcomp}%
\usepackage{manyfoot}%
\usepackage{booktabs}%
\usepackage{algorithm}%
\usepackage{algorithmicx}%
\usepackage{algpseudocode}%
\usepackage{listings}%
\usepackage{xcolor}
\usepackage{cancel}

\usepackage{mathtools}
\usepackage{setspace}
\usepackage{color}
\usepackage{soul}
\usepackage{amsthm}

\def\bq{\begin{equation}}
\def\eq{\end{equation}}
\def\bqy{\begin{eqnarray}}
\def\eqy{\end{eqnarray}}
\def\bal#1\eal{\begin{align}#1\end{align}}

\def\bfH{\mathbf{H}}

\def\bfV{\mathbf{V}}

\def\bfv{\mathbf{v}}
\def\bfu{\mathbf{u}}
\def\bfp{\mathbf{p}}

\def\bfp{\mathbf{p}}
\def\bfn{\mathbf{n}}

\def\bfxi{\boldsymbol{\xi}}

\def\md{\mathrm{d}}

\def\bq{\begin{equation}}
\def\eq{\end{equation}}
\def\bqy{\begin{eqnarray}}
\def\eqy{\end{eqnarray}}
\def\bal#1\eal{\begin{align}#1\end{align}}

\def\4tensLa{\bar{\bar{\Lambda}}}

\def\al{\alpha}

\def\De{\Delta}

\def\Ga{\Gamma}

\def\na{\nabla}
\def\om{\omega}
\def\Om{\Omega}

\def\na{\nabla}

\def\nn{\nonumber}

\def\qqquad{\quad\quad\quad}

\setlength{\parindent}{0pt}



\newtheorem{theorem}{Theorem}[section]
\newtheorem{proposition}[theorem]{Proposition}
\newtheorem{lemma}[theorem]{Lemma}

\raggedbottom

\begin{document}

\markboth{A. Zaidni, S. Benjelloun, R. Boukharfane}
{Global existence of weak solutions to incompressible anisotropic CHNS system}

%
\title{Global existence of weak solutions to incompressible anisotropic Cahn--Hilliard--Navier--Stokes system}

\author{
Azeddine Zaidni\thanks{Corresponding author: Azeddine.zaidni@um6p.ma} \\
College of computing, University Mohammed VI Polytechnic (UM6P), Morocco
\and
Saad Benjelloun \\
De Vinci Higher Education, De Vinci Research Center, Paris, France
\and
Radouan Boukharfane \\
College of computing , University Mohammed VI Polytechnic (UM6P), Morocco
}

\date{}
\maketitle
\begin{abstract}
{\bfseries Abstract.}\quad We study the anisotropic, incompressible Cahn-Hilliard-Navier-Stokes system with variable density in a bounded smooth domain 
$\Omega \subset \mathbb{R}^d$. This work extends previous results on the isotropic case by incorporating anisotropic surface energy, represented by 
$\mathfrak{F}= \int_{\Omega} \frac{\epsilon}{2}\, \Gamma^2(\nabla \phi) $. The thermodynamic consistency of this system, as well as its modeling background and physical motivation, has been established in \cite{anderson2000phase,taylor-cahn98, zaidni2024}. Using a Galerkin approximation scheme, we prove the existence of global weak solutions in both two- and three-dimensions $(d=2,3)$. A key ingredient in extending the local existence of approximate solutions to a global one is the application of Bihari's inequality combined with a fixed-point argument.
\end{abstract}

\noindent\textbf{Keywords:} {Navier-Stokes; Cahn-Hilliard; Galerkin approximation; Global weak solution; Anisotropic surface energy.}

\section{Introduction}

The well-known Navier-Stokes equations govern the motion of a single-phase fluid. However, in the case of two-phase fluids, chemical reactions, phase changes, and mass transfer between phases become significant and cannot be ignored. J. W. Cahn and J. E. Hilliard were the first to derive mathematical equations that describe phase separation in a binary fluid \cite{cahn1958free}. Here we investigate generalizations that combine the Cahn-Hilliard equation with equations that describe the dynamics of fluid flow, referred to as Cahn-Hilliard-Navier-Stokes (CHNS) systems. CHNS systems are designed to describe the hydrodynamic properties of a two-phase mixture.
The phenomenon of material transport along an interface is known as the Marangoni effect. The presence of a surface tension gradient naturally induces the migration of particles, moving from regions of low tension to those of high tension. This gradient can be induced by a concentration gradient or alternatively a temperature gradient. In two-phase theory, the interface between phases is regarded as being diffuse. According to the work of Taylor \& Cahn \cite{taylor-cahn98}, one can model the diffuse interface by a single order parameter, say $\phi$, with a functional of free energy,  
\[
\mathfrak{F}= \int_{\Omega} \frac{\epsilon}{2}\, \Gamma^2(\nabla \phi) + \frac{1}{\epsilon} V(\phi)\,,
\]
where $\Gamma$ is a homogeneous function of degree one that maps a vector to a real number. More details will be provided later in this section. Here, $V$ can be any nonnegative function that equals zero at $\phi = \pm1$, and $\epsilon$ is a small parameter that approaches zero in the sharp-interface limit. We choose the order parameter $\phi$ to represent the concentration.

Anderson et al.\ \cite{anderson2000phase} proposed a phase-field solidification model with convection in which the interface may exhibit anisotropic surface energy. The choice of energy and entropy is given by
\begin{align*}
H_{\mathrm{AMW}} &= \int_{\Omega} \frac{\rho}{2}\, |\bfu|^2 + \rho u(\rho,s,\phi) 
+ \frac{\epsilon_E^2}{2}\Gamma^2(\nabla \phi)\,, \\
S_{\mathrm{AMW}} &= \int_{\Omega} \rho s - \frac{\epsilon_S^2}{2} \Gamma^2(\nabla \phi)\,,
\end{align*}
where the coefficients $\epsilon_S$ and $\epsilon_E$ are assumed to be constant, say equal to $1$. $\rho$ is the mass  density of the mixture, $\mathbf{u}$ is the mass-averaged velocity of the mixture, $s$ is the specific entropy, $\phi$  is the phase variable representing the specific concentration, and $u$ denotes the internal energy. In this case, the nonlinear system of equations governing the motion of a mixture of two incompressible non-Newtonian fluids is given by:
\begin{align}
&\partial_t \rho + \mathbf{u} \cdot \nabla \rho = 0, \label{rho} \\
&\rho \partial_t \phi + \rho \mathbf{u} \cdot \nabla \phi = \operatorname{div} (D(\phi) \nabla \mu), \label{phi} \\
&\rho \partial_t \mathbf{u} + \rho (\mathbf{u} \cdot \nabla) \mathbf{u} - \operatorname{div}(\nu(\phi) \mathbb{D}(\mathbf{u})) = -\nabla \pi - \operatorname{div}(\Gamma \mathbf{\bfxi}(\nabla\phi) \otimes \nabla \phi), \label{velocity_S} \\
&{ \operatorname{div}\bfu = 0,}\label{Eq3}
\\
&\rho \mu = \rho F'(\phi) - \operatorname{div}(\Gamma \mathbf{\bfxi}(\nabla\phi)), \label{mu}
\end{align}
where $\pi$ is the pressure, $\nu(\phi)$ is the viscosity, $D(\phi)$ is the diffusion coefficient, and $\mu$ is the chemical potential. The symmetric strain-rate tensor of $\mathbf{u}$ is denoted by $\mathbb{D}(\mathbf{u}) = \frac{1}{2}(\nabla \mathbf{u} + \nabla \mathbf{u}^{T})$. The function $F$ represents the logarithmic function defined on $[-1,1]$, which follows from a mean-field model:
\[
F(s) = \frac{\lambda_1}{2}(1- s^2) + \frac{\lambda_2}{2}\left[(1+s) \ln \frac{(1+s)}{2}+(1-s) \ln \frac{(1-s)}{2}\right] := \frac{\lambda_1}{2}(1- s^2) + G(s).\label{log_pot}
\]
where $0<\lambda_2<\lambda_1$. As above, $\Gamma$ is a homogeneous function of degree one of $\mathbf{p} = (p_1, \ldots, p_d) \in \mathbb{R}^d$, and $\bfxi$ is defined as:
\[
\Gamma(\mathbf{p}) = \mathbf{p} \cdot \bfxi \coloneqq p_j \frac{\partial \Gamma(\mathbf{p})}{\partial p_j}.
\]
Differentiating this gives:
\[
\frac{\partial \Gamma}{\partial p_i} = \frac{\partial}{\partial p_i} ( \mathbf{p}\cdot \bfxi) = \frac{\partial^2 \Gamma}{\partial p_i \partial p_j} p_j + \xi_i = \xi_i,
\]
where $p_j$ must be a null eigenvector of the matrix $\partial^2 \Gamma/\partial p_i \partial p_j$. Henceforth, we will assume the argument of $\Gamma$ to be $\nabla \phi$. For the case of isotropic surface energy, where $\Gamma(\nabla \phi) = |\nabla \phi|$, the associated homogeneous function of degree zero is given by $\bfxi = \nabla \phi / |\nabla \phi|$. The thermodynamic consistency for this system, for any anisotropic surface energy, was established in \cite{anderson2000phase, zaidni2024}, where the reader is also referred to for further details on the modeling background. The system is considered in $\Omega \times (0, T)$, where $\Omega$ is a bounded domain (open and connected set) in $\mathbb{R}^d$ for $d = 2,3$, with a regular boundary $\partial \Omega$, and $T > 0$ is a given positive time.

We complete the system with the following boundary and initial conditions:
\[
\begin{cases}
\mathbf{u} = 0,~\partial_{\mathbf{n}} \mu = \partial_{\mathbf{n}} \phi = 0,~\Gamma \bfxi(\nabla \phi) \cdot \mathbf{n} = 0 & \text{on}~~\partial \Omega \times (0, T), \\
\rho(\cdot, 0) = \rho_0,~\mathbf{u}(\cdot, 0) = \mathbf{u}_0,~\phi(\cdot, 0) = \phi_0 & \text{in}~~\Omega.
\end{cases}\label{Boundray_conditionss}
\]
Throughout this work, the functions $\nu(s)$ and $D(s)$ are assumed to be in \( W^{1,\infty}(\mathbb{R}) \) such that \( 0 < \nu_* \leq \nu \leq \nu^* \) and \( 0 < D_* \leq D \leq D^* \), where $\nu_*$, $\nu^*$, $D_*$, and $D^*$ are positive constants.

Previous works mainly focused on the isotropic case. {We present below a concise review of relevant results on systems with variable density}. The existence of global weak solutions for the incompressible Cahn-Hilliard-Navier-Stokes (CHNS) system with variable density has been established in both two and three dimensions \cite{giorgini2020weak,abels2024,munteanu2024,rui2024global,abels2013}. For results on strong solutions, we refer the reader to { \cite{giorgini2020weak, Zhao, li2024strong, kotschote2015strong}}. In the compressible case, results have been established in one space dimension \cite{Chen,Cherfils,elbar2024,giorgini2021navier,dingwell}. Beyond the isotropic case, the global existence of solutions for higher-order CHNS systems in two dimensions has also been established \cite{Jiaojiao}. {Some recent works have addressed anisotropic effects, but only in the context of the Cahn–Hilliard equation. In particular, in \cite{Garcke, garcke2023}, the authors establish the existence and uniqueness results for weak solutions in the anisotropic setting. In addition, a variant of this model, consisting of the Navier–Stokes–Voigt equations \cite{dilmi2025} coupled with the Cahn–Hilliard equation, has been investigated in \cite{giorgini2023existence}. In a related direction, compressible Navier--Stokes systems with non-monotone pressure have been shown to generate oscillations and phase-transition effects, offering an alternative description closely connected to diffuse-interface models such as CHNS \cite{tzavaras2024}.
}\\

In this paper, we prove the existence of global weak solutions for the incompressible anisotropic Cahn-Hilliard-Navier-Stokes system \eqref{rho}-\eqref{mu}. To this end, we state our assumptions on the anisotropic surface energy. We assume that the function $\Ga$ satisfies the following:
\begin{itemize}
\item $\textbf{H}_1$: There exist two positive numbers, $r$ and $R$, such that for any $\mathbf{p} \in \mathbb{R}^d$, we have \( r\|\mathbf{p}\|_2^2 \leq \Gamma^2(\mathbf{p}) \leq R\|\mathbf{p}\|_2^2 \), where \( \|\mathbf{p}\|_2^2 = \sum_{i=1}^d p_i^2 \).
\item $\textbf{H}_2$: The map \( \mathbf{p} \mapsto \Ga {\bfxi}(\mathbf{p}) \) is linear.
\item $\textbf{H}_3$: { 
$\Ga {\bfxi}$
is assumed to be strongly monotone, i.e., there
exists a constant $a_\Ga > 0$ such that
\[
\Ga {\bfxi}(\mathbf{p})\cdot  \mathbf{p} \geq a_\Ga |\bfp|^2.
\]
}
\end{itemize}
Taylor \& Cahn \cite{taylor-cahn98} provide a family of candidate functions $\Gamma$, specifically:
\[
\Gamma_{\alpha,\beta}^2 (\mathbf{p}) = p_1^2 + p_2^2 + p_3^2 + 2\alpha(|p_1 p_2| + |p_1 p_3| + |p_2 p_3|) + 2\beta(|p_1 - p_2|^2 + |p_1 - p_3|^2 + |p_2 - p_3|^2),
\]
for $d = 3$ and $\alpha, \beta > -1$. The functions $\Gamma_{\alpha,\beta}$ satisfy $\textbf{H}_1$, $\textbf{H}_2$, and $\textbf{H}_3$ for $ \alpha=0$ and $\beta > {-1}/{8}$. A set of functions that satisfy the previous assumptions is provided by $\Gamma^2(\mathbf{p}) = \mathbf{p}^T \mathbf{M} \mathbf{p}$, where $\mathbf{M}$ is a positive definite matrix. In this case, $R$ and $r$ represent the largest and smallest eigenvalues of the matrix $\mathbf{M}$, respectively.

Here, we give an example where $\al=0$ and $\beta = 1/2$. In this case, we have 
\bq
\resizebox{1\hsize}{!}{
$
\Ga\bfxi (\nabla\phi)\otimes\nabla\phi = \left[\begin{array}{ccc}
3(\partial_x\phi)^2 -\partial_x\phi\partial_y\phi-\partial_x\phi\partial_z\phi& -(\partial_y\phi)^2 + 3\partial_x\phi\partial_y\phi-\partial_y\phi\partial_z\phi & -(\partial_z\phi)^2 -\partial_y\phi\partial_z\phi+3\partial_x\phi\partial_z\phi \\
-(\partial_x\phi)^2 +3\partial_x\phi\partial_y\phi-\partial_x\phi\partial_z\phi & 3(\partial_y\phi)^2 -\partial_x\phi\partial_y\phi-\partial_y\phi\partial_z\phi & -(\partial_z\phi)^2 +3\partial_y\phi\partial_z\phi-\partial_y\phi\partial_z\phi \\
-(\partial_x\phi)^2 -\partial_x\phi\partial_y\phi+3\partial_x\phi\partial_z\phi & -(\partial_y\phi)^2 -\partial_x\phi\partial_y\phi+3\partial_y\phi\partial_z\phi & 3(\partial_z\phi)^2 -\partial_z\phi\partial_x\phi-\partial_z\phi\partial_y\phi
\end{array}\right]
$}
\eq
The equation \eqref{mu} then becomes

\bq\rho\mu = \rho F^{\prime}(\phi) - 3 \Delta \phi - 2 \partial_{xy}\phi - 2 \partial_{xz}\phi - 2 
\partial_{yz}\phi.\eq

\section{Main Result}
Before formulating our main result, we need to introduce some definitions and notation. Let $\mathcal{C}_c^{k}(\Omega),\,\, k\in \mathbb{N}$ be the space of all functions that, together with all partial derivatives up to order $k$, are continuous in $\Omega$. Let $p \geq 1$ and $q > 0$. Then $L^p (\Omega)$ and $W^{q,p} (\Omega)$ are the usual Lebesgue and Sobolev spaces, respectively. $(f, g):= \int_{\Omega} f(x) g(x) \, dx$ denotes the scalar product with respect to the spatial variable. Moreover, for a Banach space $X$ and a real interval $I$, we
denote by $L^p(I; X)$ the Bochner space, which is equipped with the norm $\left(\int_I \|\,.\,\|_X^p \, dt \right)^{1/p}$.

We introduce the following velocity spaces $$\mathfrak{C}= \left\{\mathbf{u} \in \left(\mathcal{C}_c^{\infty}(\Omega)\right)^d \, ; \, \operatorname{div}\mathbf{u} = 0\right\},$$
$$\mathbf{V} = \text{the closure of } \mathfrak{C} \text{ in } \left(H^1_0(\Omega)\right)^d,$$
$$\mathbf{H} = \text{the closure of } \mathfrak{C} \text{ in } \left(L^2(\Omega)\right)^d,$$
where $\mathcal{C}_c^{\infty}(\Omega)$ is the space of smooth functions with compact support in $\Omega$. 

Given $1 \leq p \leq +\infty$, the Besov spaces denoted by $B_{p, \infty}^{\frac{1}{4}}(0,T ; X)$ consist of the set of functions $f \in L^p(0,T ; X)$ with finite norm
\begin{equation*}
\|f\|_{B_{p, \infty}^{\frac{1}{4}}(0,T ; X)} = \|f\|_{L^p(0,T ; X)} + \sup_{0<h \leq 1} h^{-\frac{1}{4}}\left\|\Delta_h f\right\|_{L^p\left(I_h ; X\right)},
\end{equation*}
where $\Delta_h f(t) = f(t+h) - f(t)$ and $I_h = \{t \in (0,T): t+h \in (0,T)\}$.

We now state the main result of our paper as follows.

\begin{theorem} \label{th1} 
Let $T$ be a positive time. Assume that $\rho_0 \in L^\infty(\Omega)$ with $0 < \rho_* \leq \rho_0 \leq \rho^* < \infty$, $\mathbf{u}_0 \in L^2(\Omega)$, {  whose divergence vanishes in the weak sense}, and $\phi_0 \in H^1(\Omega) \cap L^{\infty}(\Omega)$ such that $\|\phi_0\|_{L^{\infty}(\Omega)} \leq 1$ and  { 
$-1 < \frac{\overline{\rho_0 \phi_0}}{\rho^*} < 1$, where $\overline{\rho_0\phi_0} = \frac{1}{|\Om|}\int_{\Om}\rho_0(x)\phi_0(x)\, \md x$}. 

Then, there exists a weak solution $(\rho, \mathbf{u}, \phi, \mu)$ to the system \eqref{rho}-\eqref{mu} satisfying
\begin{enumerate}
\item 
\bal
& \rho \in \mathcal{C}([0,T];L^2(\Omega)) \cap L^{\infty}(\Omega \times (0, T)) \cap W^{1, \infty}\left(0, T ; H^{-1}(\Omega)\right), \label{reg_rho}\\
& \mathbf{u} \in L^2\left(0, T ; \mathbf{V}\right) \cap B_{2, \infty}^{\frac{1}{4}}\left(0, T ; \mathbf{H}\right), \\
& \phi \in  { L^{\infty}\left(0, T ; H^1(\Omega)\right)} \cap B_{\infty, \infty}^{\frac{1}{4}}\left(0, T ; L^2(\Omega)\right), \\
& \mu \in 
{ L^2\left(0, T ; H^1(\Omega)\right)},\label{reg_nu}
\eal
{ 
\item 
\begin{equation}
\int_0^T \!\!\int_{\Omega} \rho\,\partial_t \psi \, \md x\, \md t
+
\int_0^T \!\!\int_{\Omega} \rho\, \bfu \cdot \nabla \psi \, \md x\, \md t
= 0,
\end{equation}
for all $\psi \in C_c^\infty((0,T)\times \Omega )$.

\item \begin{align}
&- \int_0^T \!\!\int_{\Omega} \rho\, \bfu \cdot \partial_t \bfv \, \md x\, \md t
- \int_0^T \!\!\int_{\Omega} \rho\, \bfu \otimes \bfu : \nabla \bfv \, \md x\, \md t
+ \int_0^T \!\!\int_{\Omega} \nu(\phi)\, \mathbb{D} (\bfu) : \nabla \bfv \, \md x\, \md t \notag \\
&\qquad
= \int_{\Omega} \rho_0 \bfu_0 \cdot \bfv(0) \, \md x
+ \int_0^T \!\!\int_{\Omega} \Ga \bfxi(\nabla \phi) \otimes \nabla \phi : \nabla \bfv \, \md x\, \md t ,
\end{align}
for all $\bfv \in C_c^1\!\left([0,T); \bfV\right)$.

\item \begin{align}
&- \int_0^T \!\!\int_{\Omega} \rho\, \partial_t \phi \, w \, \md x\, \md t
- \int_0^T \!\!\int_{\Omega} \rho\, \phi\, \bfu  \cdot \nabla w \, \md x\, \md t
+ \int_0^T \!\!\int_{\Omega} \nabla \mu \cdot \nabla w \, \md x\, \md t \notag \\
&\qquad
= \int_{\Omega} \rho_0 \phi_0 \, w(0) \, \md x ,
\end{align}
for all $w \in C_c^1\!\left([0,T); H^1(\Omega)\right)$.

}

\item Additionally, we have
\bq
\rho \mu = \rho F^{\prime} (\phi) - \operatorname{div}(\Gamma \mathbf{\bfxi}(\nabla\phi) ) \quad \text{a.e. in } (0,T) \times \Omega.
\eq
\end{enumerate}
\end{theorem}

The proof of Theorem \ref{th1} is organized as follows. In Section \S\ref{Energy balance}, we derive the energy balance. In Section \S\ref{Reg_log}, we introduce a system with a regularized logarithmic potential, which is defined on $\mathbb{R}$ and parameterized by $\varepsilon \in \left(0,1-\sqrt{1-{\lambda_2}/{\lambda_1}}\right)$. In Section \S\ref{Galerkin scheme}, we establish, through a Galerkin scheme, the global existence of the solution to the system with the regularized logarithmic potential. The proof is divided into three steps:
\begin{itemize}
\item \textbf{Step 1}: We construct a linearized version of the system and prove the existence of a solution using the classical Cauchy-Lipschitz theorem. The solution is defined only on a local time interval $[0,T_0)$, where $T_0$ depends on the initial data.
\item \textbf{Step 2}: By analyzing the map that takes the function around which we linearize as input and returns the solution given by the Cauchy–Lipschitz theorem as output, and applying Bihari's inequality \cite{bihari1956generalization}, we establish the existence of a small time interval $[0,\widetilde{T}]$ where both the input and output data remain well-defined.

\item \textbf{Step 3}: We establish the regularity of the solution and we show that the map from input data to output data has a Schauder fixed point. This argument follows the same approach used in \cite[see Appendix A]{giorgini2020weak}.
\end{itemize}
At the end of Section \S\ref{Galerkin scheme}, we show that the fixed point cannot blow-up up in finite time $T_0$. In Section \S\ref{Global existence}, we present the proof of the main result, which is structured into two parts. First, we establish that the Galerkin approximation has a convergent subsequence, and its limit is a global weak solution of the system with the regularized logarithmic potential. Second, we prove uniform estimates, and by passing to the limit as $\varepsilon \rightarrow 0^{+}$, we obtain a weak solution of our system \eqref{rho}-\eqref{mu}.

\section{Energy balance}\label{Energy balance}
In this section, we derive the total energy balance. By multiplying \eqref{velocity_S} by $\bfu$ and integrating over $\Omega$, we obtain
\bq
\frac{1}{2}\frac{ \md}{ \md t}\int_{\Omega} \rho |\bfu|^2 \md x + \int_{\Omega}\nu(\phi)|\mathbb{D}(\bfu)|^2 \md x = \int_{\Omega} - \operatorname{div}({ \Ga\bfxi(\na \phi)}\otimes\nabla\phi)\cdot\bfu\,\md x.\label{Energy_K}
\eq
We have
\bal
\operatorname{div} ({ \Ga\bfxi(\nabla\phi)}\otimes\nabla\phi) &= \nabla\phi(\operatorname{div}({ \Ga\bfxi(\nabla\phi)})) + { { \Ga\bfxi(\nabla\phi)}\cdot\nabla(\nabla\phi)} \nn\\
&= \nabla\phi(\operatorname{div}({ \Ga\bfxi(\nabla\phi)})) + \nabla\left(\frac{1}{2}\Gamma^2(\nabla\phi)\right).
\eal
Using \eqref{mu}, we find
\bal
-\operatorname{div} ({ \Ga\bfxi(\nabla\phi)}\otimes\nabla\phi) &=
\rho \mu \nabla\phi - \rho F'(\phi) \nabla \phi - \nabla\left(\frac{1}{2}\Gamma^2(\nabla\phi)\right)\nn\\
&=\rho \mu \nabla\phi - \rho \nabla F(\phi) - \nabla\left(\frac{1}{2}\Gamma^2(\nabla\phi)\right).
\eal
Assuming the boundary condition on $\bfu$ \eqref{Boundray_conditionss}, the equation \eqref{Energy_K} becomes 
\bq
\begin{aligned}
\frac{1}{2}\frac{ \md}{ \md t}\int_{\Omega} \rho |\bfu|^2 \md x &+ \int_{\Omega}\nu(\phi)|\mathbb{D}(\bfu)|^2 \md x =\int_{\Omega} \rho \mu \bfu\cdot\nabla\phi- \rho \bfu\cdot\nabla F(\phi)\md x.\label{Kin}
\end{aligned}
\eq
Multiplying \eqref{mu} by $\partial_t \phi$ and integrating, we have
\bq
\begin{aligned}
\int_{\Omega} \rho\mu \, \partial_t\phi =& - \int_{\Omega} \partial_t \phi \, \operatorname{div}({ \Ga\bfxi(\nabla\phi)})\md x+ \int_{\Omega}\rho F^{\prime}(\phi)\partial_t\phi\, \md x \\
=& \int_{\Omega} { \Ga\bfxi(\nabla\phi)} \cdot \partial_t\nabla\phi \md x + \int_{\Omega} \rho F^{\prime}(\phi) \partial_t \phi\, \md x\\
=&\frac{\md}{\md t}\int_{\Omega} \frac{1}{2}\Gamma^2(\nabla\phi) + \rho F(\phi) \, \md x -\int_{\Omega} F(\phi)\partial_t\rho\, \md x.
\label{intern}
\end{aligned}
\eq
Here, we used the boundary conditions $\Ga\bfxi(\nabla\phi)\cdot\bfn = 0$.

Multiplying \eqref{phi} by $\mu$ and integrating, we find using the Neumann boundary conditions on $\mu$
\bq
\int_{\Omega} \rho \mu \partial_t \phi \, \md x + \int_{\Omega} \rho \mu \bfu \cdot \nabla \phi\, \md x + \int_{\Omega} D(\phi)|\nabla\mu|^2 \md x = 0. \label{diff} 
\eq
By combining \eqref{Kin}, \eqref{intern}, and \eqref{diff}, we get
\bq
\begin{aligned}
\frac{ \md}{ \md t}\int_{\Omega} \frac{1}{2}\rho |\bfu|^2 + \frac{1}{2}\Gamma^2(\nabla\phi) + \rho F(\phi)\,\md x + \int_{\Omega}\nu(\phi)|\mathbb{D}(\bfu)|^2 + D(\phi)|\nabla \mu|^2 \md x= 0.
\end{aligned}
\eq

Here, the Ginzburg–Landau energy $\int_{\Omega} \frac{1}{2}\Gamma^2(\nabla\phi) + \rho F(\phi)\,\md x $ characterizes the interfacial energy in the region where $\phi$ departs from the minima of $F(\phi)$ and $\int_{\Omega} \frac{1}{2}\rho |\bfu|^2\,\md x$ is the kinetic energy of the fluid.

\section{Regularized logarithmic potential}\label{Reg_log}

We introduce a family of regular potentials $F_{\varepsilon}$ approximating the logarithmic potential \eqref{log_pot}. For any $\varepsilon \in (0,1 - \sqrt{1 - {\lambda_2}/{\lambda_1}})$, we define
\begin{equation}
F_{\varepsilon}(s) = \frac{\lambda_1}{2}(1 - s^2) + G_{\varepsilon}(s),
\end{equation}
where $G_{\varepsilon}$ is defined by
\begin{equation}
G_{\varepsilon}(s)= 
\begin{cases} 
\sum_{j=0}^2 \frac{1}{j!} G^{(j)}(1-\varepsilon)[s-(1-\varepsilon)]^j & \text{for } s \geq 1-\varepsilon, \\ 
G(s) & \text{for } s \in [-1+\varepsilon, 1-\varepsilon], \\ 
\sum_{j=0}^2 \frac{1}{j!} G^{(j)}(-1+\varepsilon)[s-(-1+\varepsilon)]^j & \text{for } s \leq -1+\varepsilon.
\end{cases}
\end{equation}
We have $F_{\varepsilon} \in \mathcal{C}^2(\mathbb{R})$ and 
\begin{equation}
F_{\varepsilon}(s) \leq F(s) \quad \text{for all } s \in (-1,1), \quad
|F_{\varepsilon}^{\prime}(s)| \leq |F^{\prime}(s)| \quad \text{for all } s \in (-1,1). \label{F_propriety}
\end{equation}

In order to construct weak solutions by an approximation procedure, we introduce the following regularized problem:
\begin{align}
&\partial_t \rho^{\varepsilon} + \mathbf{u}^{\varepsilon} \cdot \nabla \rho^{\varepsilon} = 0, \label{rho_epsilon} \\
&\rho^{\varepsilon} \partial_t \phi^{\varepsilon} + \rho^{\varepsilon} \mathbf{u}^{\varepsilon} \cdot \nabla \phi^{\varepsilon} = \operatorname{div} (D(\phi^{\varepsilon}) \nabla \mu^{\varepsilon}), \label{phi_epsilon} \\
&\rho^{\varepsilon} \partial_t \mathbf{u}^{\varepsilon} + \rho^{\varepsilon} (\mathbf{u}^{\varepsilon} \cdot \nabla^{\varepsilon}) \mathbf{u}^{\varepsilon} - \operatorname{div}(\nu(\phi^{\varepsilon}) \mathbb{D}(\mathbf{u}^{\varepsilon})) = -\nabla \pi^{\varepsilon} - \operatorname{div}(\Gamma \mathbf{\bfxi}(\nabla \phi^{\varepsilon}) \otimes \nabla \phi^{\varepsilon})\label{velocity_S_epsilon}, \\
&{ \operatorname{div}\bfu^{\varepsilon} = 0\,,}
\\
&\rho^{\varepsilon} \mu^{\varepsilon} = \rho^{\varepsilon} F^{\prime}_{\varepsilon} (\phi^{\varepsilon}) - \operatorname{div}(\Gamma \mathbf{\bfxi}(\nabla \phi^{\varepsilon})). \label{muepsilon}
\end{align}
Furthermore, we complete the system with the same boundary and initial conditions \eqref{Boundray_conditionss}. The existence of weak solutions to the system \eqref{rho_epsilon}--\eqref{muepsilon} can be established by a standard argument: in the case of bounded domains, we construct approximate solutions via a Galerkin approximation scheme, derive uniform bounds, and thus obtain solutions by passing to the limit. This will be detailed in section \ref{Galerkin scheme}.

\section{Global existence of an approximate solution}\label{Galerkin scheme}

In this section, we construct a converging sequence via a Galerkin approximation scheme. We define the inner product on $\mathbf{V}$ by $(\mathbf{u},\mathbf{v})_{\mathbf{V}} = (\nabla\mathbf{u},\nabla\mathbf{v})$ and the norm $\|\mathbf{u}\|_{\mathbf{V}} = \|\nabla\mathbf{u}\|_{L^2(\Omega)}$. For any integer $n \geq 1$, we define the finite-dimensional subspaces of $\mathbf{V}$ and $H^1(\Omega)$, respectively, by $\mathbf{V}_n = \text{span}\{\mathbf{v}_1,\ldots,\mathbf{v}_n\}$ and $H_n = \text{span}\{\omega_1,\ldots,\omega_n\}$, where the families of functions $\{\mathbf{v}_j\}_{j\geq 1}$ and $\{\omega_j\}_{j\geq 1}$ represent the eigenfunctions of the Stokes operator (with zero Dirichlet boundary conditions) and the Laplace operator (with zero Neumann boundary conditions and $\Gamma \bfxi(\nabla\phi)\cdot\bfn = 0$), respectively. We denote $(0 < \lambda_1^s \leq \lambda_2^s \leq \ldots)$ and $(1 = \lambda_1 \leq \lambda_2 \leq \ldots)$ as the corresponding eigenvalues of the Stokes operator and the Laplace operator, respectively. We denote by $P_n^1$ and $P_n^2$ the orthogonal projections onto $\mathbf{V}_n$ and $H_n$ with respect to the inner product in $\mathbf{H}$ and $L^2(\Omega)$, respectively. We define the initial data as follows: we consider the triplet $(\rho_{0n},\mathbf{u}_{0n},\phi_{0n})$, where $\mathbf{u}_{0n} = P_n^1\mathbf{u}_0$ and $\phi_{0n} = P_n^{2}\phi_0$, and $\rho_{0n}$ constructed using mollification by convolution such that for any $n \geq 1$, $\rho_{0n} \in \mathcal{C}^{\infty}(\overline{\Omega})$ and $\rho_{*} \leq \rho_{0n}(x) \leq \rho^{*}$ for any $x \in \overline{\Omega}$. The constructed initial data satisfy
\begin{align}
\rho_{0n} &\rightarrow \rho_0 \quad \text{strongly in } L^2(\Omega), \\
\rho_{0n} &\rightharpoonup \rho_0 \quad \text{weak-star in } L^{\infty}(\Omega), \\
\mathbf{u}_{0n} &\rightarrow \mathbf{u}_0 \quad \text{strongly in } (L^2(\Omega))^d, \\
\phi_{0n} &\rightarrow \phi_0 \quad \text{strongly in } H^1(\Omega). \label{phi_0m}
\end{align}
We have $\bfu_{0n} = 0, \quad\partial_{\bfn}\mu_{0n} = \partial_{\bfn}\phi_{0n} = 0, \text{ on } \partial\Omega\times(0,{T})$. Since $\Ga\bfxi$ is linear, we also have
$$\Ga\bfxi(\nabla\phi_{0n})\cdot\bfn = \sum_{i=1}^n(\phi_{0n},\omega_i)_{L^2(\Omega)} \Ga\bfxi(\nabla\omega_i)\cdot\bfn=0.$$
We recall that for any $\bfu_n \in \bfV_n$ and $\phi_n\in H_n$, we have
\begin{align}
\|\mathbf{u}_n\|_{L^2(\Omega)} &\leq \frac{1}{\sqrt{\lambda_1^s}} \|\nabla\mathbf{u}\|_{L^2(\Omega)}, \label{poincare_ineq} \\
\|\mathbf{u}_n\|_{\mathbf{V}} &\leq \sqrt{\lambda_n^s}\|\mathbf{u}_n\|_{L^2(\Omega)}, \\
\|\phi_n\|_{H^2(\Omega)} &\leq \sqrt{\lambda_n}\|\phi_n\|_{H^1(\Omega)}, \\
\|\mathbf{u}_n\|_{L^{\infty}(\Omega)} &\leq K\|\mathbf{u}_n\|_{H^2(\Omega)} \leq K_n \|\mathbf{u}_n\|_{L^2(\Omega)},\label{u_n_inq}
\end{align}
where $K$ and $K_n$ are constants. Let $1\leq p\leq +\infty$.

\begin{proposition}{(Local existence of an approximate solution)}. Given the initial data $(\rho_{0n},\bfu_{0n},\phi_{0n},\mu_{0n})$ constructed as above, there exists a time interval $[0,\widetilde{T}]$ with $\widetilde{T} > 0$ and $(\rho_n^{\varepsilon},\bfu_n^{\varepsilon},\phi_n^{\varepsilon},\mu_n^{\varepsilon})$ such that
\bq
\rho_n^{\varepsilon} \in \mathcal{C}^1(\overline{\Omega}\times[0,\widetilde{T}]),\,\,\bfu_n^{\varepsilon}\in \mathcal{C}^1([0,\widetilde{T}];\bfV_n),\,\, \phi_n^{\varepsilon}\in \mathcal{C}^1([0,\widetilde{T}];H_n), \,\, \mu_n^{\varepsilon}\in \mathcal{C}([0,\widetilde{T}];H_n),
\eq
\bq
\partial_t\rho_n^{\varepsilon} + \bfu_n^{\varepsilon}\cdot\nabla\rho_n^{\varepsilon} = 0 \text{ in } \Omega\times(0,\widetilde{T}),\label{density_T}
\eq
\bq
\begin{aligned}
(\rho_n^{\varepsilon}\partial_t \bfu_n^{\varepsilon},\bfv) + (\rho_n^{\varepsilon}(\bfu_n^{\varepsilon}\cdot\nabla)\bfu_n^{\varepsilon},\bfv) + (\nu(\phi_n^{\varepsilon})\mathbb{D}\bfu_n^{\varepsilon},\nabla\bfv)=& (\rho_n^{\varepsilon}\mu_n^{\varepsilon}\nabla\phi_n^{\varepsilon},\bfv)\\
&-(\rho_n^{\varepsilon}\nabla(F_{\varepsilon}(\phi_n^{\varepsilon})),\bfv),\label{velocity_T}
\end{aligned}
\eq
\bq
(\rho_n^{\varepsilon}\partial_t\phi_n^{\varepsilon},\omega) + (\rho_n^{\varepsilon}\bfu_n^{\varepsilon}\cdot\nabla\phi_n^{\varepsilon},\omega)+ (D(\phi_n^{\varepsilon})\nabla\mu_n^{\varepsilon},\nabla\omega) = 0,\label{concentration_T}
\eq
\bq
(\rho_n^{\varepsilon}\mu_n^{\varepsilon},\omega) =(\Ga\bfxi(\nabla\phi_n^{\varepsilon}),\nabla\omega) + (\rho_n^{\varepsilon} F_{\varepsilon}^{\prime}(\phi_n^{\varepsilon}),\omega),\label{chemical_T}
\eq
for all $\bfv\in \bfV_n$, $\omega\in H_n$ and for all $t\in [0,\widetilde{T}]$.
\end{proposition}

\begin{proof}
\textbf{Step 1}: For any $n\geq 1$, let $(\widetilde{\bfu}^{\varepsilon}_n,\widetilde{\phi}_n^{\varepsilon})$ be such that $\widetilde{\bfu}_n^{ \varepsilon} \in \mathcal{C}([0,T];\bfV_n)$ 
and $\quad \widetilde{\phi}_n^{\varepsilon}\in \mathcal{C}([0,T];H_n).$ The density corresponding to the given velocity $\widetilde{\bfu}_n^{\varepsilon}$ is then explicitly given by  
\bq
\rho_n^{\varepsilon}(x,t) = \rho_{0n}(\widetilde{y}^{\varepsilon}_n(0,t,x)),\quad \rho_n^{\varepsilon} \in \mathcal{C}^1 (\overline{\Omega}\times[0,T]),\label{regularity-density}
\eq
where $\widetilde{y}^{\varepsilon}_n$ is the unique solution of the following Cauchy problem
\bq
\frac{\md \widetilde{y}^{\varepsilon}_n}{\md \tau}(\tau,t,x) = \widetilde{\bfu}^{\varepsilon}_n(\widetilde{y}^{\varepsilon}_n(\tau,t,x),\tau),\quad\quad \widetilde{y}^{\varepsilon}_n(t,t,x) = x.
\eq
We have the following estimates 
\bq
\rho_*\leq \rho_n^{\varepsilon}(x,t)\leq\rho^* \text{ for any } (x,t)\in \overline{\Omega}\times[0,T],
\eq
\bq
\max_{t\in [0,T]}\|\nabla\rho_n^{\varepsilon}(t)\|_{L^{\infty}(\Omega)}\leq K \|\nabla\rho_{0n}\|_{L^{\infty}(\Omega)}\exp\left(\int_0^T \|\widetilde{\bfu}^{\varepsilon}_n(\tau)\|_{W^{1,\infty}(\Omega)}\md \tau\right),\label{estim_density}
\eq
where the constant $K$ is independent of the integer $n$. Given a triplet $(\bfu_n,\phi_n,\mu_n)$ such that

$$\bfu_n^{\varepsilon}(x,t) = \sum_{i=1}^n {\alpha}_i^{\varepsilon}(t) \bfv_i(x),\quad \phi_n^{\varepsilon}(x,t) = \sum_{i=1}^n {\beta}^{\varepsilon}_i(t) \omega_i(x),\quad \mu_n^{\varepsilon}(x,t) = \sum_{i=1}^n {\gamma}_i^{\varepsilon}(t) \omega_i(x),$$

which solves the following system
\bal
(\rho_n^{\varepsilon}\partial_t \bfu_n^{\varepsilon},\bfv_i) + (\rho_n^{\varepsilon}(\widetilde{\bfu}_n^{\varepsilon}\cdot\nabla)\bfu_n^{\varepsilon},\bfv_i) + (\nu(\widetilde{\phi}_n^{\varepsilon})\mathbb{D}\bfu_n^{\varepsilon},\nabla\bfv_i)=&(\rho_n^{\varepsilon}\mu_n^{\varepsilon}\nabla\widetilde{\phi}_n^{\varepsilon},\bfv_i)\nn\\
&-(\rho_n^{\varepsilon}\nabla(F_{\varepsilon}(\phi_n^{\varepsilon})),\bfv_i),
\label{velocity}
\eal
\bq
(\rho_n^{\varepsilon}\partial_t\phi_n^{\varepsilon},\omega_i) + (\rho_n^{\varepsilon}\bfu_n^{\varepsilon}\cdot\nabla\widetilde{\phi}_n^{\varepsilon},\omega_i)+ (D(\widetilde{\phi}_n^{\varepsilon})\nabla\mu_n^{\varepsilon},\nabla\omega_i) = 0,\label{concentration}
\eq

\bq
(\rho_n^{\varepsilon}\mu_n^{\varepsilon},\omega_i) = (\Ga\bfxi(\nabla\phi_n^{\varepsilon}),\nabla\omega_i) + (\rho_n^{\varepsilon} F_{\varepsilon}^{\prime}(\phi_n^{\varepsilon}),\omega_i),\label{chemical}
\eq
for all $i=1,\ldots,N$. 

We define the unknown multi-component quantities as follows
$\mathbf{A}^n = (\alpha_1^{\varepsilon},\ldots,\alpha_n^{\varepsilon})$, $\mathbf{B}^n = (\beta_1^{\varepsilon},\ldots,\beta_n^{\varepsilon})$ and $\mathbf{C}^n = (\gamma_1^{\varepsilon},\ldots,\gamma_n^{\varepsilon})$. Hence, from the classical Cauchy-Lipschitz theorem, we obtain the existence and uniqueness of a maximal solution
$\left(\mathbf{A}^n, \mathbf{B}^n\right) \in \mathcal{C}^1\left(\left[0, T_0\right); \mathbb{R}^n \times \mathbb{R}^n\right), \mathbf{C}^n \in$ $\mathcal{C}\left(\left[0, T_0\right) ; \mathbb{R}^n\right)$.

\textbf{Step 2:} Multiplying \eqref{velocity} by $\alpha_i^{\varepsilon}$ and summing over $i$, we find
\bq
\begin{aligned}
\frac{ \md}{ \md t}\int_{\Omega} \frac{1}{2}\rho_n^{\varepsilon} |\bfu_n^{\varepsilon}|^2 \md x + \int_{\Omega}\nu(\widetilde{\phi}_n^{\varepsilon})|\mathbb{D}(\bfu_n^{\varepsilon})|^2 \md x=\int_{\Omega}& \rho_n^{\varepsilon} \mu_n^{\varepsilon} \bfu_n^{\varepsilon}\cdot\nabla\widetilde{\phi}_n^{\varepsilon}\md x \\
&-\int_{\Omega} \rho_n^{\varepsilon} \bfu_n\cdot\nabla F_{\varepsilon}(\phi_n^{\varepsilon})\md x.\label{KIN2}
\end{aligned}
\eq
Here, the density is the solution to the transport equation with velocity $\widetilde{\bfu}^{\varepsilon}_n$. Now, we multiply
\eqref{concentration} by $\gamma_i^{\varepsilon}$ and \eqref{chemical} by $\frac{\md }{\md t} \beta_i^{\varepsilon}$, respectively, and summing over $i$, we get
\bq 
\begin{aligned}
\frac{\md }{\md t} \int_{\Omega} \left[\frac{1}{2}\Ga^2(\nabla\phi_n^{\varepsilon}) + \rho_n^{\varepsilon} F(\phi_n^{\varepsilon})\right] \md x +\int_{\Omega}D(\widetilde{\phi}_n^{\varepsilon})|\nabla\mu_n^{\varepsilon}|^2 \md x &+ \int_{\Omega}\rho_n^{\varepsilon} \bfu_n^{\varepsilon}\cdot\nabla\widetilde{\phi}_n^{\varepsilon}\mu_n^{\varepsilon}\md x \\
&= \int_{\Omega} \partial_t \rho_n^{\varepsilon} F_{\varepsilon}(\phi_n^{\varepsilon})\md x\label{Pot}
\end{aligned}
\eq
By adding the two previous equations \eqref{KIN2} and \eqref{Pot}, we obtain 

\bq
\frac{\md }{\md t} E^{\varepsilon}(\rho_n^{\varepsilon},\bfu_n^{\varepsilon},\phi_n^{\varepsilon}) + \int_{\Omega} \nu(\widetilde{\phi}_n^{\varepsilon})|\mathbb{D}\bfu_n^{\varepsilon}|^2 + D(\widetilde{\phi}_n^{\varepsilon})|\nabla \mu_n^{\varepsilon}|^2 = \mathcal{R}^{\varepsilon},\label{Energy_Rest}
\eq
where $E^{\varepsilon}$ and $\mathcal{R}^{\varepsilon}$ are given by
\bq
E^{\varepsilon}(\rho_n^{\varepsilon},\bfu_n^{\varepsilon},\phi_n^{\varepsilon}) = \int_{\Omega} \frac{1}{2} \rho_n^{\varepsilon} |u_n^{\varepsilon}|^2 + \frac{1}{2}\Ga^2(\nabla\phi_n^{\varepsilon}) + \rho_n^{\varepsilon} F(\phi_n^{\varepsilon}) \,\md x,\eq
\bq
\mathcal{R}^{\varepsilon} = \int_{\Omega} F_{\varepsilon}(\phi_n^{\varepsilon})(\bfu_n^{\varepsilon} - \widetilde{\bfu}_n^{\varepsilon})\cdot\nabla\rho_n^{\varepsilon} \,\md x.
\eq
We note that $\mathcal{R}^{\varepsilon}$ will be zero at the fixed point. 

{  By construction $\|\widetilde{\bfu}_n^{\varepsilon}\|_{\mathcal{C}([0,T];L^2(\Om))}\leq K_0$}. Then, using \eqref{u_n_inq} we get 
\bq
\|\widetilde{\bfu}_n^{\varepsilon}\|_{\mathcal{C}([0,T];H^2(\Omega))}\leq K_n \|\widetilde{\bfu}_n^{\varepsilon}\|_{\mathcal{C}([0,T];L^2(\Omega))} \leq K_n K_0.
\eq 

{According to \eqref{estim_density} and the fact that $H^3(\Omega) \xhookrightarrow{} W^{1,\infty}(\Omega)$, we have 
\bq
\begin{aligned}
\max_{t\in [0,T]} \|\nabla\rho_n(t)^{\varepsilon}\|_{L^{\infty}(\Omega)}&\leq K \|\nabla\rho_{0n}\|_{L^{\infty}(\Omega)}\operatorname{exp}\left(\int_0^T K_1 \|\widetilde{\bfu}_n^{\varepsilon}(\tau)\|_{H^3(\Omega)}\md \tau \right)\\
&\leq K \|\nabla\rho_{0n}\|_{L^{\infty}(\Omega)}\operatorname{exp}\left(K_1 K_n K_0 T \right):= K_{\rho}.
\end{aligned}
\eq}

The last inequality comes from the regularity theory of the Stokes operator on smooth domains:

\bq
\| \widetilde{u_n}\|_{C([0,T]; H^3(\Omega))}\leq C(n) \| \widetilde{u_n}\|_{C([0,T]; L^2(\Omega))} \qquad \forall\,\, \widetilde{u_n} \in \bfV_n .
\eq

Next, we have $F_{\varepsilon}(s) \geq 0$ for all $s$ in $\mathbb{R}$, then we control the quantity $\mathcal{R}^{\varepsilon}$ as follows:

\bq
\begin{aligned}
|\mathcal{R}^{\varepsilon}| &\leq \frac{1}{\rho_{*}} \left(\int_{\Omega} \rho_n^{\varepsilon} F_{\varepsilon}(\phi_n^{\varepsilon})\md x \right)\|\widetilde{\bfu}_n^{\varepsilon} - \bfu_n^{\varepsilon}\|_{L^{\infty}(\Omega)}\|\nabla\rho_n^{\varepsilon}\|_{L^{\infty}(\Omega)}\\& \leq \frac{K_n K_{\rho}}{\rho_{*}}E^{\varepsilon}(\rho_n^{\varepsilon},\bfu_n^{\varepsilon},\phi_n^{\varepsilon})\|\widetilde{\bfu}_n^{\varepsilon} - \bfu_n^{\varepsilon}\|_{L^2(\Omega)}\\
& \leq \frac{K_n K_{\rho}}{\rho_{*}}E(\rho_n^{\varepsilon},\bfu_n^{\varepsilon},\phi_n^{\varepsilon})\|\bfu_n^{\varepsilon}\|_{L^2(\Omega)} + \frac{K_n K_{\rho} K_0}{\rho_{*}}E(\rho_n^{\varepsilon},\bfu_n^{\varepsilon},\phi_n^{\varepsilon})\\&
\leq \frac{K_n K_{\rho}}{\rho_{*}}E^{\varepsilon}(\rho_n^{\varepsilon},\bfu_n^{\varepsilon},\phi_n^{\varepsilon})\frac{\|\nabla\bfu_n^{\varepsilon}\|_{L^2(\Omega)}}{\sqrt{\lambda_1^s}} + \frac{K_n K_{\rho} K_0}{\rho_{*}}E^{\varepsilon}(\rho_n^{\varepsilon},\bfu_n^{\varepsilon},\phi_n^{\varepsilon}) \\
& \leq \frac{K_n K_{\rho}}{\rho_{*}}E^{\varepsilon}(\rho_n^{\varepsilon},\bfu_n^{\varepsilon},\phi_n^{\varepsilon})\frac{\sqrt{2}\sqrt{\nu_*}\|\mathbb{D}\bfu_n^{\varepsilon}\|_{L^2(\Omega)}}{\sqrt{\nu_*\lambda_1^s}} + \frac{K_n K_{\rho} K_0}{\rho_{*}}E^{\varepsilon}(\rho_n^{\varepsilon},\bfu_n^{\varepsilon},\phi_n^{\varepsilon})\\&
\leq \int_{\Omega}\frac{\nu(\phi_n^{\varepsilon})}{2}|\mathbb{D}\bfu_n^{\varepsilon}|^2 \md x + \frac{K_n^2 K_{\rho}^2}{\nu_* \rho_*^2 \lambda_1^s} [E^{\varepsilon}(\rho_n^{\varepsilon},\bfu_n^{\varepsilon},\phi_n^{\varepsilon})]^2 + \frac{K_n K_{\rho} K_0}{\rho_{*}}E^{\varepsilon}(\rho_n^{\varepsilon},\bfu_n^{\varepsilon},\phi_n^{\varepsilon})\\
& \leq \int_{\Omega}\frac{\nu(\phi_n^{\varepsilon})}{2}|\mathbb{D}\bfu_n^{\varepsilon}|^2 \md x + \frac{2 K_n^2 K_{\rho}^2}{\nu_* \rho_*^2 \lambda_1^s} [E^{\varepsilon}(\rho_n^{\varepsilon},\bfu_n^{\varepsilon},\phi_n^{\varepsilon} )]^2 + \frac{\nu_* \lambda_1^s K_0^2}{4}
\end{aligned}
\eq
According to \eqref{Energy_Rest}, we arrive at
\bal
\frac{\md }{\md t} E^{\varepsilon}(\rho_n^{\varepsilon},\bfu_n^{\varepsilon},\phi_n^{\varepsilon}) + \int_{\Omega}\frac{\nu(\phi_n^{\varepsilon})}{2}|\mathbb{D}\bfu_n^{\varepsilon}|^2 \md x + \int_{\Omega}D(\widetilde{\phi}_n^{\varepsilon})|\nabla \mu_n^{\varepsilon}|^2 \md x \leq\, &C_1 [E^{\varepsilon}(\rho_n^{\varepsilon},\bfu_n^{\varepsilon},\phi_n^{\varepsilon})]^2\nn\\
&+ C_2K_0^2
\eal
where $ C_1 : = {2 K_n^2 K_{\rho}^2}/{\nu_* \rho_*^2 \lambda_1^s}$ and $ C_2:= {\nu_* \lambda_1^s}/{4}$. In particular, we have 
\bq
\frac{\md }{\md t} E^{\varepsilon}(\rho_n^{\varepsilon},\bfu_n^{\varepsilon},\phi_n^{\varepsilon}) \leq C_1 [E^{\varepsilon}(\rho_n^{\varepsilon},\bfu_n^{\varepsilon},\phi_n^{\varepsilon})]^2 + C_2 K_0^2.
\eq
Integrating over $[0,t]$ for $t<{T}_0$, we get
\bq
E^{\varepsilon}(\rho_n^{\varepsilon},\bfu_n^{\varepsilon},\phi_n^{\varepsilon})(t) \leq E^{\varepsilon}(\rho_{0n},\bfu_{0n},\phi_{0n}) + \int_0^t C_2 K_0^2 \md \tau + C_1\int_0^t [E^{\varepsilon}(\rho_n^{\varepsilon},\bfu_n^{\varepsilon},\phi_n^{\varepsilon})(\tau)]^2\md \tau,
\eq
where 
\bq
E^{\varepsilon}(\rho_{0n},\bfu_{0n},\phi_{0n}) = \int_{\Omega} \frac{1}{2} \rho_{0n} |u_{0n}|^2 + \frac{1}{2}\Ga^2(\nabla\phi_{0n}) + \rho_{0n} F_{\varepsilon}(\phi_{0n}) \md x.
\eq
Now, we apply Bihari’s inequality [see Appendix \ref{A:Bihari's}], with the following choice
\bal
&y(t) = E^{\varepsilon}(\rho_n^{\varepsilon},\bfu_n^{\varepsilon},\phi_n^{\varepsilon})(t),\quad y_0 = E^{\varepsilon}(\rho_{0n},\bfu_{0n},\phi_{0n}) : = E^{\varepsilon}_n(0),
\\ &g(s) = C_2 K_0^2 ,\quad f(s) = C_1 s^2. 
\eal
The anti-derivative of $-\frac{1}{f(s)}$ which cancels at $+\infty$ is $F(s) := \frac{1}{C_1 s}$. Since $F^{-1}(s) = F(s)$, Bihari’s inequality in our case reads

\bq
\sup_{t\leq T^{\prime}} E^{\varepsilon}(\rho_n^{\varepsilon},\bfu_n^{\varepsilon},\phi_n^{\varepsilon})(t)\leq \frac{1}{C_1\left(\frac{1}{C_1(E^{\varepsilon}_n(0) + C_2 K_0^2 T^{\prime}) }- T^{\prime}\right)},\quad \text{for all  } T^{\prime} < \min({T}_0,T^*),
\eq
where $T^* = \frac{1}{C_1(E^{\varepsilon}_n(0) + C_2 T^*)}$. Let $K_0$ be sufficiently large so that $\|\bfu_{0n}\|_{L^2(\Omega)} + \|\phi_{0n}\|_{H^1(\Omega)} \leq K_0^2$. We aim to show that there exists a sufficiently small time $\widetilde{T} = \widetilde{T}(K_0)$ ($\widetilde{T}< \min(T_0,T^*)$) such that
\bq
\|\bfu_{n}\|_{\mathcal{C}([0,\widetilde{T}];L^2(\Omega))} + \|\phi_{n}\|_{\mathcal{C}([0,\widetilde{T}];H^1(\Omega))}\leq K_0^2.
\eq
To this end, we use that
$$\|\bfu^{\varepsilon}_n\|_{L^2(\Omega)}^2 \leq \frac{2}{\rho_*} E^{\varepsilon}(\rho_n^{\varepsilon},\bfu_n^{\varepsilon},\phi_n^{\varepsilon}),\quad \|\nabla\phi_n^{\varepsilon}\|_{L^2(\Omega)}^2 \leq \frac{2}{r} E^{\varepsilon}(\rho_n^{\varepsilon},\bfu_n^{\varepsilon},\phi_n^{\varepsilon}).$$

Since the function $F_{\varepsilon}$ is polynomial of degree two on the domain \\
$\{s\in \mathbb{R}\, ;\, |s|\geq 1-\varepsilon \}$, there exist $c_{\varepsilon}>0$ and $M_{\varepsilon}>0$ such that

\bq
F_{\varepsilon}(s) > M_{\varepsilon} s^2\qquad \text{ for all } \,|s|>c_{\varepsilon},
\eq
which implies that
\bal
\|\phi_n^{\varepsilon}\|^2_{L^2(\Omega)} = \int_{\{|\phi_n^{\varepsilon}|\leq c_{\varepsilon\}}} |\phi_n^{\varepsilon}|^2 \md x + \int_{{ \{|\phi_n^{\varepsilon}| \geq c_{\varepsilon}\}}} |\phi_n^{\varepsilon}|^2 \md x & \leq c_{\varepsilon}^2 |\Om| + \frac{1}{\rho_* M_{\varepsilon}} \int_{\Om} \rho_n^{\varepsilon} F_{\varepsilon}(\phi_n^{\varepsilon})\md x \nn\\
&\leq c_{\varepsilon}^2 |\Om| + \frac{1}{\rho_* M_{\varepsilon}} E^{\varepsilon}(\rho_n^{\varepsilon},\bfu_n^{\varepsilon},\phi_n^{\varepsilon}).
\eal

Thus,
\bq
\begin{aligned}
\|\bfu_{n}^{\varepsilon}\|_{\mathcal{C}([0,\widetilde{T}];L^2(\Omega))} + \|\phi_{n}^{\varepsilon}\|_{\mathcal{C}([0,\widetilde{T}];H^1(\Omega))}&\leq \theta\sup_{t\leq \widetilde{T}} E(\rho_n^{\varepsilon},\bfu_n^{\varepsilon},\phi_n^{\varepsilon})(t) + c_{\varepsilon}^2 |\Om|\\
&\leq \theta \left(\frac{ E_n(0) +  C_2 K_0^2 \widetilde{T}}{1 - C_1 E_n(0)\widetilde{T} - C_1 C_2 K_0^2 \widetilde{T}^2}\right)+c_{\varepsilon}^2 |\Om|
\end{aligned}
\eq
where $\theta:= \max\left\{\frac{2}{\rho_*},\frac{1}{\rho_* M_{\varepsilon}},\frac{2}{r}\right\}$. The following condition should be satisfied
\bq
\theta \left(\frac{ E_n^{\varepsilon}(0) + C_2 K_0^2 \widetilde{T}}{1 - C_1 E_n^{\varepsilon}(0)\widetilde{T} - C_1 C_2 K_0^2 \widetilde{T}^2}\right)+c_{\varepsilon}^2 |\Om| \leq K_0^2,
\eq
which can be written as
\bq
E_n^{\varepsilon}(0) + C_2 K_0^2\widetilde{T} \leq \frac{1}{\theta} (K_0^2 - c_{\varepsilon}^2 |\Om|) - C_1 E_n^{\varepsilon}(0) \frac{1}{\theta}(K_0^2 - c_{\varepsilon}^2 |\Om|) \widetilde{T} - C_1 C_2 K_0^2 \frac{1}{\theta} (K_0^2 - c_{\varepsilon}^2 |\Om|)\widetilde{T}^2.
\eq
Since we can take $\widetilde{T} \leq 1$ the condition holds if
\bal
E_n^{\varepsilon}(0) + \left(C_2 K_0^2 + C_1 E_n^{\varepsilon}(0) \frac{1}{\theta}(K_0^2 -c_{\varepsilon}^2 |\Om|) + C_1 C_2 \frac{1}{\theta} K_0^2(K_0^2 - c_{\varepsilon}^2 |\Om|)\right)\widetilde{T}  \nn\\
\hspace{5cm}\leq\frac{1}{\theta} (K_0^2 - c_{\varepsilon}^2 |\Om|).\label{condition}
\eal
Thus, by taking $E_n^{\varepsilon}(0) < \frac{1}{\theta}(K_0^2 - c_{\varepsilon}^2 |\Omega|)$, there exists $\widetilde{T} = \widetilde{T}(K_0)$ sufficiently small such that inequality \eqref{condition} holds. We conclude that
\bq
\|\bfu_{n}^{\varepsilon}\|_{\mathcal{C}([0,\widetilde{T}];L^2(\Omega))} + \|\phi_{n}^{\varepsilon}\|_{\mathcal{C}([0,\widetilde{T}];H^1(\Omega))}\leq K_0^2.
\eq

For any set of parameters $(n,{\varepsilon},\rho_*,\rho^*, E_n^{\varepsilon}(0), \nu_*,\rho_{0n})$, let now introduce the set
$$ \mathcal{S}_n = \{ (\bfu_n^{\varepsilon} ,\phi_n^{\varepsilon}) \in \mathcal{C}([0,\widetilde{T}];\bfV_n\times H_n) : \|\bfu_n^{\varepsilon}\|_{\mathcal{C}([0,\widetilde{T}];L^2(\Omega))} + \|\phi_n^{\varepsilon}\|_{\mathcal{C}([0,\widetilde{T}];H^1(\Omega))}\leq K_0^2\}.$$

\textbf{Step 3:} Next, we show that the derivatives of $\bfu_n^{\varepsilon}$ and $\phi_n^{\varepsilon}$ with respect to time are also bounded on $[0,\widetilde{T}]$. Multiplying \eqref{velocity} by $\frac{\md}{\md t}\alpha_i^{\varepsilon}(t)$ and summing over $i$, we find
$$
\begin{aligned}
\rho_*  \|\partial_t \bfu_n^{\varepsilon}\|_{L^2(\Omega)}^2\leq  & C \rho^*\|\widetilde{\bfu}_n^{\varepsilon}\|_{H^2(\Omega)}\|\nabla \bfu_n^{\varepsilon}\|_{L^2(\Omega)}\|\partial_t \bfu_n^{\varepsilon}\|_{L^2(\Omega)}\\
&+\nu^*\|\nabla \bfu_n^{\varepsilon}\|_{L^2(\Omega)}\|\nabla \partial_t \bfu_n^{\varepsilon}\|_{L^2(\Omega)}\\
& +C \rho^*\|\mu_n^{\varepsilon}\|_{L^2(\Omega)}\|\widetilde{\phi}_n^{\varepsilon}\|_{H^2(\Omega)}\|\nabla \partial_t \bfu_n^{\varepsilon}\|_{L^2(\Omega)}\\&+\rho^* C\left(1+\|\phi_n^{\varepsilon}\|_{H^2(\Omega)}^4\right)\|\partial_t \bfu_n^{\varepsilon}\|_{L^2(\Omega)}.
\end{aligned}.
$$
We multiply \eqref{chemical} by $\gamma_i^{\varepsilon}(t)$ and sum over $i$, We find
$$\begin{aligned}
\rho_{*} \|\mu_n^{\varepsilon}\|^2_{L^2(\Omega)} &\leq \left(\rho^* \|F_{\varepsilon}^{\prime} (\phi_n^{\varepsilon})\|_{L^2(\Omega)}+\|\operatorname{div}(\Ga\bfxi(\nabla\phi_n^{\varepsilon})\|_{L^2(\Omega)}\right)\|\mu_n^{\varepsilon}\|_{L^2(\Omega)}\\
&\leq \left(C\rho^* \|\phi_n^{\varepsilon}\|^3_{H^1(\Omega)} +C \|\phi_n^{\varepsilon}\|_{H^2(\Omega)}\right)\|\mu_n^{\varepsilon}\|_{L^2(\Omega)}\\
&\leq \left(C\rho^* \|\phi_n^{\varepsilon}\|^3_{H^1(\Omega)}+C \lambda_n\|\phi_n^{\varepsilon}\|_{H^1(\Omega)}\right)\|\mu_n^{\varepsilon}\|_{L^2(\Omega)}.
\end{aligned}
$$
Thus,
\bq
\|\mu_n\|_{L^2(\Omega)} \leq C_{\mu}\label{estim_nu},
\eq
where $C_{\mu} = C_{\mu}(n,{\varepsilon},\rho_*,\rho^*, \lambda_n, K_0) $. We deduce that there exist a constant $K_1 = K_1(n,{\varepsilon},\rho_*,\rho^*, \nu^*,K_0)$ such that
\bq
\max_{[0,\widetilde{T}]}\|\partial_t\bfu_n^{\varepsilon}(t)\|_{L^2(\Omega)}\leq K_1^2.
\eq
Similarly, multiplying \eqref{concentration} by $\frac{\mathrm{d}}{\mathrm{d} t} \beta_i^{\varepsilon}$ and summing over $i$, we get
$$
\rho_*\|\partial_t \phi_n^{\varepsilon}\|^2_{L^2(\Omega)} 
\leq\left(C \rho^* \sqrt{\lambda_n^s} \sqrt{\lambda_n}\|\bfu_n^{\varepsilon}\|_{L^2(\Omega)}\|\widetilde{\phi}_n^{\varepsilon}\|_{H^1(\Omega)}+D_{*}\lambda_n\|\mu_n^{\varepsilon}\|_{L^2(\Omega)}\right)\|\partial_t \phi_n^{\varepsilon}\|_{L^2(\Omega)} .
$$

Then, there exists $K_2=K_2\left(n,{\varepsilon}, \rho_*, \rho^*, K_0\right)$ such that
$$
\max _{[0,\widetilde{T}]}\|\partial_t \phi_n^{\varepsilon}(t)\|_{L^2(\Omega)} \leq K_2^2 .
$$

Let us set $\widetilde{K}_0^2=K_1^2+K_2^2$. We consider the subset $\widetilde{\mathcal{S}}_n$ of $\mathcal{S}_n$ defined as
$$
\begin{aligned}
&\widetilde{\mathcal{S}}_n=\left\{(\bfv_n^{\varepsilon}, \psi_n^{\varepsilon}) \in \mathcal{C}^1\left([0, \widetilde{T}] ; \mathbf{V}_n \times H_n\right):\right.  \max _{[0,\widetilde{T}]}\|\bfv_n^{\varepsilon}(t)\|_{L^2(\Omega)}+\max _{[0,\widetilde{T}]}\|\psi_n^{\varepsilon}(t)\|_{H^1(\Omega)} \leq K_0^2,\\
&\hspace{5.3cm}\left.\max_{[0,\widetilde{T}]}\|\partial_t \bfv_n^{\varepsilon}(t)\|_{L^2(\Omega)}+\max _{[0,\widetilde{T}]}\|\partial_t \psi_n^{\varepsilon}(t)\|_{H^1(\Omega)} \leq \widetilde{K}_0^2\right\}
\end{aligned}
$$
By the Ascoli-Arzelà theorem, the set $\widetilde{\mathcal{S}}_n$ is compact in $\mathcal{S}_n$. We consider the map $\Lambda$ defined as
$$
\Lambda : {\mathcal{S}}_n\longmapsto \widetilde{\mathcal{S}}_n,\qquad (\widetilde{\bfu}_n^{\varepsilon} , \widetilde{\phi}_n^{\varepsilon}) \longmapsto
\Lambda(\widetilde{\bfu}_n^{\varepsilon} , \widetilde{\phi}_n^{\varepsilon}) := (\bfu_n^{\varepsilon},\phi_n^{\varepsilon}). $$
Since the map $\bfp\longmapsto\Ga\bfxi$ is linear and continuous, $\Lambda$ is a continuous map on ${\mathcal{S}}_n$. This result can be proved following the same method as in \cite[see Appendix A]{giorgini2020weak}. Now we show that the solution is defined on the whole interval $[0,T]$ for any fixed $T>0$, instead of $\widetilde{T}$.
\end{proof}
\begin{proposition}\label{prop2}
Given the initial data $(\rho_{0n},\bfu_{0n},\phi_{0n},\mu_{0n})$ and let $(\rho_{n}^{\varepsilon},\bfu_{n}^{\varepsilon},\phi_{n}^{\varepsilon},\mu_{n}^{\varepsilon})$ denote a solution of \eqref{density_T}-\eqref{chemical_T} defined in $[0,T_0)$. There is a constant
$K_0$, depending on $(\rho_{*},\rho^{*},\|\bfu_{0}\|_{L^2(\Omega)},\|\phi_{0}\|_{H^1(\Omega)})$, but independent of $T_0$ , such that 
\bq
\|\bfu_{n}^{\varepsilon}\|_{\mathcal{C}([0,{T}_0];L^2(\Omega))} + \|\phi_{n}^{\varepsilon}\|_{\mathcal{C}([0,{T}_0];H^1(\Omega))}\leq K_0^2.
\eq
\end{proposition}
\begin{proof}\label{proof_prop_2}
Multiplying \eqref{velocity_T} by $\alpha_i^{\varepsilon}$ and summing over $i$ we find
\bq
\begin{aligned}
\frac{ \md}{ \md t}\int_{\Omega} \frac{1}{2}\rho_n^{\varepsilon} |\bfu_n^{\varepsilon}|^2 \md x + \int_{\Omega}\nu({\phi}_n^{\varepsilon})|\mathbb{D}(\bfu_n^{\varepsilon})|^2 \md x=\int_{\Omega}& \rho_n^{\varepsilon} \mu_n^{\varepsilon}\bfu_n^{\varepsilon}\cdot\nabla{\phi}_n^{\varepsilon}\md x\\
&-\int_{\Omega} \rho_n^{\varepsilon} \bfu_n^{\varepsilon}\cdot\nabla F_{\varepsilon}(\phi_n^{\varepsilon})\md x.\label{KIN3}
\end{aligned}
\eq
Now, we multiply
\eqref{concentration_T} by $\gamma_i^{\varepsilon}$ and \eqref{chemical_T} by $\frac{\md }{\md t} \beta_i^{\varepsilon}$, respectively, and summing over $i$, we get
\bq 
\begin{aligned}
\frac{\md }{\md t} \int_{\Omega} \left[\frac{1}{2}\Ga^2(\nabla\phi_n^{\varepsilon}) + \rho_n^{\varepsilon} F_{\varepsilon}(\phi_n^{\varepsilon})\right] \md x &+ \int_{\Omega}\rho_n^{\varepsilon} \bfu_n^{\varepsilon}\cdot\nabla{\phi}_n^{\varepsilon}\mu_n^{\varepsilon}\md x &+ \int_{\Omega}D(\phi_n^{\varepsilon})|\nabla\mu_n^{\varepsilon}|^2 \md x\\
&= \int_{\Omega} \partial_t \rho_n^{\varepsilon} F_{\varepsilon}(\phi_n^{\varepsilon})\md x. \label{Pot_1}
\end{aligned}
\eq
By summing the previous equation \eqref{KIN3} and \eqref{Pot_1}, we obtain 

\bal
\frac{\md}{\md t} \int_{\Omega} \frac{1}{2} \rho_n^{\varepsilon}\left|\bfu_n^{\varepsilon}\right|^2+\frac{1}{2}\Ga^2\left(\nabla \phi_n^{\varepsilon}\right)&+\rho_n^{\varepsilon} F_{\varepsilon}\left(\phi_n^{\varepsilon}\right) \md x\nn\\
&+\int_{\Omega} \nu\left(\phi_n^{\varepsilon}\right)\left|\mathbb{D} \bfu_n^{\varepsilon}\right|^2+D(\phi_n^{\varepsilon})\left|\nabla \mu_n^{\varepsilon}\right|^2 \md x=0 .
\eal

Integrating in time over $[0, t]$ where $0<t<T_0$, we deduce that
\bq
\begin{aligned}
\int_{\Omega}& \frac{1}{2} \rho_n^{\varepsilon}(t)\left|\bfu_n^{\varepsilon}(t)\right|^2 +\frac{1}{2}\Ga^2\left(\nabla \phi_n^{\varepsilon}(t)\right)+\rho_n^{\varepsilon}(t) F_{\varepsilon}\left(\phi_n^{\varepsilon}(t)\right) \md x\\
&+\int_0^t \int_{\Omega} \nu\left(\phi_n^{\varepsilon}(\tau)\right)\left|\mathbb{D} \bfu_n^{\varepsilon}(\tau)\right|^2 \md x \md \tau +\int_0^t \int_{\Omega}D(\phi_n^{\varepsilon}(\tau))\left|\nabla \mu_n^{\varepsilon}(\tau)\right|^2 \md x \md \tau\\
&=\int_{\Omega} \frac{1}{2} \rho_{0 n}\left|\bfu_{0 n}\right|^2+\frac{1}{2}\Ga^2\left(\nabla \phi_{0 n}\right)+\rho_{0 n} F_{\varepsilon}\left(\phi_{0 n}\right) \mathrm{d} x.\label{Enrg_n}
\end{aligned}
\eq

Using the properties of the projector operator $P_n^1$ and $P_n^2$, we find
$$
\begin{aligned}
\int_{\Omega} \frac{1}{2} &\rho_{0 n}\left|\bfu_{0 n}\right|^2  +\frac{1}{2}\Ga^2\left(\nabla \phi_{0 n}\right)+\rho_{0 n} F_{\varepsilon}\left(\phi_{0 n}\right) \mathrm{d} x \\& \leq \frac{\rho^*}{2}\|\bfu_0\|_{L^2(\Omega)}^2+\frac{R}{2}\|\phi_0\|_{H^1(\Omega)}^2+K({\varepsilon}) \rho^*\left(1+\|\phi_0\|_{H^1(\Omega)}^4\right),
\end{aligned}
$$
where the constant $K({\varepsilon}) $ is independent of $n$. Since $F_{\varepsilon}(s) \geq 0$, we obtain
$$
\begin{aligned}
\frac{\rho_*}{2}\|\bfu_n^{\varepsilon}(t)\|_{L^2(\Omega)}^2 +\frac{r}{2}\|\nabla \phi_n^{\varepsilon}(t)\|_{L^2(\Omega)}^2 \leq \frac{\rho^*}{2}\|\bfu_0\|_{L^2(\Omega)}^2&+\frac{R}{2}\|\phi_0\|_{H^1(\Omega)}^2\\
&+K({\varepsilon}) \rho^*\left(1+\|\phi_0\|_{H^1(\Omega)}^4\right).
\end{aligned}
$$

Therefore, there exists a positive constant $K_0=K_0\left({\varepsilon},\rho_*, \rho^*, \bfu_0, \phi_0,r,R\right)$ such that
$$
\sup _{[0 ,T_0]}\|\bfu_n^{\varepsilon}(t)\|_{L^2(\Omega)}+\sup_{[0 ,T_0]}\|\phi_n^{\varepsilon}(t)\|_{H^1(\Omega)} \leq K_0^2.
$$
\end{proof}
The proposition \ref{prop2} shows that if a solution exists on the time interval $[0,T_0)$, it cannot blow up as $T_0$. Consequently, we conclude that the Galerkin approximate sequence exists globally that solves the system \eqref{density_T}-\eqref{chemical_T}.

\section{Proof of theorem \ref{th1}}\label{Global existence}
In this section we prove Theorem \ref{th1}: the proof is split into three parts. Frist, we derive
estimates on the solutions of the Galerkin approximate problem \eqref{density_T}-\eqref{chemical_T}, independently of $n$. SEcond, we pass to the limit as $n\rightarrow \infty$ and recover weak solutions of the regularized problem. Third, following the sqme approach, we derive estimates on solutions of the regularized problem \eqref{rho_epsilon}-\eqref{muepsilon} independently of $\varepsilon$. We
pass to the limit as $\varepsilon\rightarrow 0^{+}$ and recover weak solutions of the problem \eqref{rho}-\eqref{mu}.

Starting from \eqref{Enrg_n} we obtain
\bq
\begin{aligned}
E^{\varepsilon}(\rho_n^{\varepsilon}(t),\bfu_n^{\varepsilon}(t),\phi_n^{\varepsilon}(t)) +\int_0^t \int_{\Omega} \nu\left(\phi_n^{\varepsilon}(\tau)\right)\left|\mathbb{D} \bfu_n^{\varepsilon}(\tau)\right|^2 +&D(\phi_n^{\varepsilon}(\tau))\left|\nabla \mu_n^{\varepsilon}(\tau)\right|^2 \md x \md \tau \\
&= E^{\varepsilon}(\rho_{0n},\bfu_{0n},\phi_{0n}).
\end{aligned}
\eq
From \eqref{phi_0m} it follows that $E^{\varepsilon}(\rho_{0n},\bfu_{0n},\phi_{0n})$ converges to $E^{\varepsilon}(\rho_{0},\bfu_{0},\phi_{0})$ as $n\rightarrow\infty$.
where
$$
E^{\varepsilon}(\rho_{0},\bfu_{0},\phi_{0}) = \int_{\Omega} \frac{1}{2} \rho_{0} |u_{0}|^2 + \frac{1}{2}\Ga^2(\nabla\phi_{0}) + \rho_{0} F_{\varepsilon}(\phi_{0}) \md x := E^{\varepsilon}_0.
$$
Since $F_{\varepsilon} \geq 0$, we have for $n$ sufficiently large
\bq
\|\bfu_n^{\varepsilon}(t)\|^2_{L^2(\Omega)} + \|\nabla\phi_n^{\varepsilon}(t)\|^2_{L^2(\Omega)} \leq \max\left\{\frac{2}{r},\frac{2}{\rho_*}\right\} E^{\varepsilon}(\rho_0,\bfu_0,\phi_0).
\eq

Consequently, there exists a constant $C = C(\varepsilon,\rho_*, \nu_*,D_*,r)$ such that

\bal
&\|\bfu_n^{\varepsilon}\|_{L^{\infty}(0,T;\bfH)}\leq C\sqrt{E_0^{\varepsilon}},\quad \|\bfu_n^{\varepsilon}\|_{L^2(0,T;\bfV)}\leq C\sqrt{E_0^{\varepsilon}},\label{u_Linfity_H_V}\quad\\
&\|\nabla\phi_n^{\varepsilon}\|_{L^{\infty}(0,T;L^2(\Omega))}\leq C\sqrt{E_0^{\varepsilon}},
\quad\|\nabla\mu_n^{\varepsilon}\|_{L^2(0,T;L^2(\Omega))}\leq C\sqrt{E_0^{\varepsilon}}.\label{Grad_estimate}
\eal

Similar estimates to those in the isotropic case can be obtained \cite{giorgini2020weak}

\bal
&\|\rho_n^{\varepsilon}\|_{L^{\infty}(0,T;H^{-1}(\Omega))} \leq C,\label{estim11}\quad\|\phi_n^{\varepsilon}\|_{L^{\infty}(0,T;H^1(\Omega))} \leq C ,\quad\|\mu_n^{\varepsilon}\|_{L^{4}(0,T;L^2(\Omega))} \leq C,\quad \\
&\|\mu_n^{\varepsilon}\|_{L^{2}(0,T;H^1(\Omega))} \leq C,\quad
\|\phi_n^{\varepsilon}\|_{B_{\infty,\infty}^{\frac{1}{4}}(0,T;L^2(\Omega))} \leq C.\label{phi_B_infity_infity}
\eal

For some positive constant $C:= C(\varepsilon,T)$ independent of $n$.

{  

Since $\omega_1 = 1$, we infer from \eqref{density_T} and \eqref{concentration_T} that

\[
\left| \int_{\Omega} \rho_n^{\varepsilon}(t)\,\phi_n^{\varepsilon}(t)\,\md x \right|
=
\left| \int_{\Omega} \rho_{0n}^{\varepsilon} \phi_{0n}^{\varepsilon} \, \md x \right|
\le
C(\varepsilon),\quad \forall t \in [0,T].
\]

In light of the generalized Poincaré inequality (see \cite[Chapter II, Section 1.4]{temam2012infinite}), together with \eqref{Grad_estimate}, the above inequality implies that

\bq
\|\phi_n^{\varepsilon}\|_{L^{\infty}(0,T;H^1({\Om}))} \le C(\varepsilon,T). \label{phi_time_infity}
\eq

}

To estimate $\|\bfu_n^{\varepsilon}\|_{B_{2,\infty}^{\frac{1}{4}}(0,T;\bfH)}$, let $h$ such that $0 <h< T$. For $0 \leq t \leq T - h$ we test \eqref{velocity_T} at time $\tau$ by $\bfu_n^{\varepsilon} (t+h) - \bfu_n^{\varepsilon} (t)$ and \eqref{density_T} at time $\tau$ by $\bfu_n^{\varepsilon}(t)\cdot(\bfu_n^{\varepsilon}(t+h) - \bfu_n^{\varepsilon} (t))$. After integration with
respect to $\tau$ from $t$ to $t+h$ and some obvious manipulations, we arrive at

\bq
\begin{aligned}\label{time_translation}
 \int_{\Omega} \rho_n^{\varepsilon}(t+h)&\left|\bfu_n^{\varepsilon}(t+h)-\bfu_n^{\varepsilon}(t)\right|^2 \md x
 =\\ 
&\underbrace{\int_{\Omega}-\left(\rho_n^{\varepsilon}(t+h)-\rho_n^{\varepsilon}(t)\right) \bfu_n^{\varepsilon}(t) \cdot\left(\bfu_n^{\varepsilon}(t+h)-\bfu_n^{\varepsilon}(t)\right) \mathrm{d} x}_{I_1(t)} \\
& +\underbrace{\int_t^{t+h} \int_{\Omega}-\operatorname{div}\left(\rho_n^{\varepsilon}(\tau) \bfu_n^{\varepsilon}(\tau) \otimes \bfu_n^{\varepsilon}(\tau)\right) \cdot\left(\bfu_n^{\varepsilon}(t+h)-\bfu_n^{\varepsilon}(t)\right) \mathrm{d} x \md \tau}_{I_2(t)} \\
& +\underbrace{\int_t^{t+h} \int_{\Omega}-\nu\left(\phi_n^{\varepsilon}(\tau)\right) \mathbb{D} \bfu_n^{\varepsilon}(\tau): \nabla\left(\bfu_n^{\varepsilon}(t+h)-\bfu_n^{\varepsilon}(t)\right) \mathrm{d} x \md \tau}_{I_3(t)} \\
& +\underbrace{\int_t^{t+h} \int_{\Omega} \rho_n^{\varepsilon}(\tau) \mu_n^{\varepsilon}(\tau) \nabla \phi_n^{\varepsilon}(\tau) \cdot\left(\bfu_n^{\varepsilon}(t+h)-\bfu_n^{\varepsilon}(t)\right) \mathrm{d} x \md \tau}_{I_4(t)} \\
& +\underbrace{\int_t^{t+h} \int_{\Omega}-\rho_n^{\varepsilon}(\tau) F_{\varepsilon}^{\prime}\left(\phi_n^{\varepsilon}(\tau)\right) \nabla \phi_n^{\varepsilon}(\tau) \cdot\left(\bfu_n^{\varepsilon}(t+h)-\bfu_n^{\varepsilon}(t)\right) \mathrm{d} x \md \tau}_{I_5(t)}. 
\end{aligned}
\eq

In the isotropic case ($\Gamma^2(\nabla\phi) = |\nabla\phi|$), we can control $\|\phi_n^{}\|_{H^2(\Omega)}$ independently of $n$. However, in the anisotropic case, this argument is not applicable. Alternatively, to estimate $\|\bfu_n^{}\|_{B_{2,\infty}^{\frac{1}{4}}(0,T;\bfH)}$,
it follows along the same lines of \cite[page 210]{giorgini2020weak}  that 
$\int_{0}^{T-h} |I_k(t)| \md t \leq C h^{\frac{1}{2}}$ for $k=1,2,3\text{ and } 4$, with $C$ is a constant independent of $n$. Thus, it is sufficient to control $I_5(t)$
$$
I_5(t) = \int_t^{t+h} \int_{\Omega}-\rho_n^{\varepsilon}(\tau) F_{\varepsilon}^{\prime}\left(\phi_n(\tau)\right) \nabla \phi_n^{\varepsilon}(\tau) \cdot\left(\bfu^{\varepsilon}_n(t+h)-\bfu^{\varepsilon}_n(t)\right) \md x \md \tau.$$

$$
\begin{aligned}
\left|I_5(t)\right| & \leq \rho^* \int_t^{t+h}\left\|F_{\varepsilon}^{\prime}\left(\phi_n^{\varepsilon}(\tau)\right)\right\|_{L^6(\Omega)}\left\|\nabla \phi_n^{\varepsilon}(\tau)\right\|_{L^2(\Omega)} \md \tau\left\|\bfu^{\varepsilon}_n(t+h)-\bfu^{\varepsilon}_n(t)\right\|_{L^3(\Omega)} \\
& \leq C\int_t^{t+h}\left\|F_{\varepsilon}^{\prime}\left(\phi_n^{\varepsilon}(\tau)\right)\right\|_{L^6(\Omega)} \mathrm{d} \tau\left(\left\|\nabla \bfu^{\varepsilon}_n(t+h)\right\|_{L^2(\Omega)}^{\frac{1}{2}}+\left\|\nabla \bfu^{\varepsilon}_n(t)\right\|_{L^2(\Omega)}^{\frac{1}{2}}\right) .
\end{aligned}
$$
Integrating from $0$ to $T-h$, we find
$$
\int_0^{T-h}\left|I_5(t)\right| \mathrm{d} t \leq C h^{\frac{3}{4}} \int_0^T\left\|F_{\varepsilon}^{\prime}\left(\phi_n^{\varepsilon}(\tau)\right)\right\|_{L^6(\Omega)} \mathrm{d} \tau.
$$
We have 
\begin{align*}
&\left\|F_{\varepsilon}^{\prime}\left(\phi_n^{\varepsilon}(\tau)\right)\right\|^6_{L^6(\Omega)} = \int_{|\phi_n^{\varepsilon}|\leq 1-\varepsilon}|F_{\varepsilon}^{\prime}\left(\phi_n^{\varepsilon}(\tau)\right)|^6 \md x + \int_{|\phi_n^{\varepsilon}|\geq 1-\varepsilon}|F_{\varepsilon}^{\prime}\left(\phi_n^{\varepsilon}(\tau)\right)|^6 \md x\\
&\hspace{1cm}\leq \left({\lambda_2}(1-\varepsilon) + \frac{\lambda_1}{2}\log(1+\varepsilon/2) + |\log(\varepsilon/2)|\right)^6 |\Om|  \\
&\hspace{3cm}+\int_{|\phi_n^{\varepsilon}|\geq 1-\varepsilon}|F_{\varepsilon}^{\prime}\left(\phi_n^{\varepsilon}(\tau)\right)|^6 \md x
\end{align*}

On the domain $\{s\in \mathbb{R}\, ;\, |s|\geq 1-\varepsilon \}$, the function $F_{\varepsilon}^{\prime}$ is polynomial of degree one. {  According to \eqref{phi_time_infity} 
and the fact that $H^1(\Om)\xhookrightarrow{} L^6(\Om)$ (for $d=2,3$)}
, we get
$$
\int_0^{T-h}\left|I_5(t)\right| \mathrm{d} t \leq C(\varepsilon)h^{\frac{3}{4}} .
$$
Then, we deduce that
\bq
\|\bfu_n^{\varepsilon}\|_{B_{2,\infty}^{\frac{1}{4}}(0,T;\bfH)} \leq C(\varepsilon,T).\label{u_B_14}
\eq

With the uniform estimates \eqref{estim11}-\eqref{u_B_14} in hand,  a compactness argument shows that a subsequence of the approximate sequence converges to some function
$(\rho^{\varepsilon},\bfu^{\varepsilon},\phi^{\varepsilon},\mu^{\varepsilon})$ satisfying system \eqref{rho_epsilon}-\eqref{muepsilon} in the sense of distributions. Since the proof closely follows \cite{giorgini2020weak}, we omit the details for brevity. {The key difference from \cite{giorgini2020weak} lies in passing to the limit of the quantity \( \Gamma \bfxi(\nabla \phi_n^{\varepsilon}) \otimes \nabla \phi_n^{\varepsilon} \). 
{  We first show that \( \phi_n^{\varepsilon} \rightarrow \phi^{\varepsilon} \) strongly in \( L^2(0,T; H^1(\Omega)) \). To this end, we will use an argument presented in \cite{Garcke}.

For any given scalar-valued or vector-valued function $f$ defined on $\Omega$, we write
$\partial^{+h}_i f$ and $\partial^{-h}_i f$ to denote the difference quotients at a
given point $x \in \Omega$ by
\[
\partial^{+h}_i f(x)
:= \frac{f(x + h e_i) - f(x)}{h},
\qquad
\partial^{-h}_i f(x)
:= \frac{f(x) - f(x - h e_i)}{h}.
\]
According to the work \cite[see page 12]{Garcke}, if assumption ${\bfH_3}$ holds, one can prove that 
\[
a_\Ga \int_{\Omega} \lvert D^2 \phi_n^{\varepsilon} \rvert^2\md x
\le
\int_{\Omega} \operatorname{div}\!\bigl(\Ga\bfxi(\nabla \phi_n^{\varepsilon})\bigr)\,\Delta \phi_n^{\varepsilon}\md x.
\]

We consider in \eqref{chemical}, $\om_i = \De\phi_n^{\varepsilon}$ we get 
\[
\int_\Om\rho_n^{\varepsilon}\mu_n^{\varepsilon}\De\phi_n^{\varepsilon} \md x= \int_\Om\Ga\bfxi(\nabla\phi_n^{\varepsilon})\nabla\De\phi_n^{\varepsilon}\md x + \int_\Om\rho_n^{\varepsilon} F_{\varepsilon}^{\prime}(\phi_n^{\varepsilon})\De\phi_n^{\varepsilon}\md x,
\]
by integration by part, we get
\[
\int_\Om\operatorname{div}\left(\Ga\bfxi(\nabla\phi_n^{\varepsilon})\right)\De\phi_n^{\varepsilon} \md x= -\int_\Om\rho_n^{\varepsilon}\mu_n^{\varepsilon}\De\phi_n^{\varepsilon}\md x  + \int_\Om\rho_n^{\varepsilon} F_{\varepsilon}^{\prime}(\phi_n^{\varepsilon})\De\phi_n^{\varepsilon}\md x,
\]
hence,
\bal
a_\Ga &\int_{\Omega} \lvert D^2 \phi_n^{\varepsilon} \rvert^2 \md x
\le  \rho^* \|\mu_n^{\varepsilon}\|_{L^2(\Om)}\|\De\phi_n^{\varepsilon}\|_{L^2(\Om)}+\rho^*\left(\int_\Om |F_{\varepsilon}^{\prime}(\phi_n^{\varepsilon})|^2\,\md x\right)^{1/2}\|\De \phi_n^{\varepsilon}\|_{L^2(\Om)}\nn\\
&\le \rho^* \|\mu_n^{\varepsilon}\|_{L^2(\Om)}\|\De\phi_n^{\varepsilon}\|_{L^2(\Om)} + \rho^* \left(\left({\lambda_2}(1-\varepsilon) + \frac{\lambda_1}{2}\log(1+\varepsilon/2) + |\log(\varepsilon/2)|\right)^2 |\Om|\right.\nn\\
&\hspace{5cm}+ \left.\int_{|\phi_n^{\varepsilon}|\geq 1-\varepsilon}|F_{\varepsilon}^{\prime}\left(\phi_n^{\varepsilon}\right)|^2 \md x \right)^{1/2} \|\De\phi_n^{\varepsilon}\|_{L^2(\Om)}.
\eal

Same here, On the domain $\{s\in \mathbb{R}\, ;\, |s|\geq 1-\varepsilon \}$, the function $F_{\varepsilon}^{\prime}$ is polynomial of degree one. 
In light of \eqref{phi_time_infity} and the fact that $H^1(\Om)\xhookrightarrow{} L^2(\Om)$ (for $d=2,3$)
, we obtain

\bq
\|\De \phi_n^{\varepsilon}\|_{L^2(\Om)}\le \rho^* (\|\mu_n^{\varepsilon}\|_{L^2(\Om)} + C(\varepsilon)). \label{Delta_phi_L2}
\eq
Now, by taking $\om = \mu_n^{\varepsilon}$ in \eqref{chemical}, we obtain
\bal 
\rho_* \, \lVert \mu_n^{\varepsilon} \rVert_{L^2(\Omega)}^2
&\le
\lVert \Ga\bfxi(\nabla \phi_n^{\varepsilon}) \rVert_{L^2(\Omega)}
\, \lVert \nabla \mu_n^{\varepsilon}\rVert_{L^2(\Omega)}
+ \rho^* \, \lVert F_{\varepsilon}'(\phi_n^{\varepsilon}) \rVert_{L^2(\Omega)}
\, \lVert \mu_n^{\varepsilon}\rVert_{L^2(\Omega)}\\
&\le C \lVert \nabla \phi_n^{\varepsilon}\rVert_{L^2(\Omega)}
\, \lVert \nabla \mu_n^{\varepsilon}\rVert_{L^2(\Omega)}
+ \rho^* C(\varepsilon)\, \lVert \mu_n^{\varepsilon}\rVert_{L^2(\Omega)}
\eal
In light of \eqref{phi_time_infity}, we have 
\[
\|\mu_n^{\varepsilon}\|^2_{L^2(\Om)}\le C(\varepsilon)(1+ \|\na\mu_n^{\varepsilon}\|_{L^2(\Om)}).
\]
Thus, from \eqref{Grad_estimate} and \eqref{phi_time_infity} we deduce that
\bq
\|\mu_n^{\varepsilon}\|_{L^4(0,T;L^2(\Om))} \le C(\varepsilon,T), \qqquad \|\mu_n^{\varepsilon}\|_{L^2(0,T;H^1(\Om))} \le C(\varepsilon,T).\label{muL4}
\eq
Now taking \eqref{Delta_phi_L2} together with \eqref{muL4} the above estimate yields 

\bq
\|\phi_n^{\varepsilon}\|_{L^2(0,T;H^2(\Om))} \le C(\varepsilon,T).\label{phi_L4_H2}
\eq

Now, According to \cite[Lemma 3.1]{giorgini2020weak}, together with \eqref{phi_B_infity_infity}-- \eqref{phi_L4_H2} and \eqref{u_Linfity_H_V}--\eqref{u_B_14} we have 

\begin{align}
\bfu_n^{\varepsilon} &\to \bfu^{\varepsilon} 
&& \text{strongly in } L^2\bigl(0,T; L^q(\Omega)\bigr),
\quad \forall\, q \in [2,6),  \\[0.5em]
\phi_n^{\varepsilon} &\to \phi^{\varepsilon} 
&& \text{strongly in } L^2\bigl(0,T; W^{1,q}(\Omega)\bigr),
\quad \forall\, q \in [2,6), \\[0.5em]
\phi_n^{\varepsilon} &\to \phi^{\varepsilon} 
&& \text{strongly in } C\bigl([0,T]; L^q(\Omega)\bigr),
\quad \forall\, q \in [2,6).
\end{align}

}

{  
According to assumption \((\bfH_2)\), the map \( \mathbf{p} \mapsto \Gamma \bfxi(\mathbf{p}) \) is linear, and thus continuous, which implies that 

\bq
\Gamma \bfxi(\nabla \phi_n^{\varepsilon}) \otimes \nabla \phi_n^{\varepsilon} \rightarrow \Gamma \bfxi(\nabla \phi^{\varepsilon}) \otimes \nabla \phi^{\varepsilon}  \text{ strongly in  }  L^1(0,T; L^2(\Omega)).\label{pass_limit}
\eq 
}

{ 

Furthermore, the equation \eqref{muepsilon} holds almost everywhere in $(0,T)\times\Om$.}

\begin{proposition}
\label{prop12}
There exists a constant $C(T) > 0$ such that, for all $\varepsilon\in (0,1 - \sqrt{1-{\lambda_2}/{\lambda_1}})$, it holds
\bq
\|\rho^{\varepsilon}\|_{L^{\infty}(0,T;H^{-1}(\Omega))} \leq C(T),\label{rho_f}
\eq 
\bq\|\phi^{\varepsilon}\|_{L^{\infty}(0,T;H^1(\Omega))} \leq C(T) ,\label{phi_L2H1}
\eq
\bq
{ 
\|\mu^{\varepsilon}\|_{L^{2}(0,T;H^1(\Omega))} \leq C(T),\label{mu_L2H1}}
\eq
\bq
\|\phi^{\varepsilon}\|_{B_{\infty,\infty}^{\frac{1}{4}}(0,T;L^2(\Omega))} \leq C(T),\label{phi_f}
\eq
\bq
\|\bfu^{\varepsilon}\|_{B_{2,\infty}^{\frac{1}{4}}(0,T;\bfH)} \leq C(T).\label{u_f}
\eq
\end{proposition}
\begin{proof}
First of all, we have the density bounds

\bq
\rho_* \leq \rho_{\varepsilon}(x, t) \leq \rho^* \quad \text { a.e. in } \Omega \times(0, T) .\label{density_bounds}
\eq

Following the same lines as in the proof of Proposition \ref{proof_prop_2}, applied to the regularized problem \ref{rho_epsilon}--\ref{muepsilon} we obtain

\bq
\begin{aligned}
\int_{\Omega}& \frac{1}{2} \rho^{\varepsilon}(t)\left|\bfu^{\varepsilon}(t)\right|^2 +\frac{1}{2}\Ga^2\left(\nabla \phi^{\varepsilon}(t)\right)+\rho^{\varepsilon}(t) F_{\varepsilon}\left(\phi^{\varepsilon}(t)\right) \md x\\
&+\int_0^t \int_{\Omega} \nu\left(\phi^{\varepsilon}(\tau)\right)\left|\mathbb{D} \bfu^{\varepsilon}(\tau)\right|^2 \md x \md \tau +\int_0^t \int_{\Omega}D(\phi^{\varepsilon}(\tau))\left|\nabla \mu^{\varepsilon}(\tau)\right|^2 \md x \md \tau\\
&=\int_{\Omega} \frac{1}{2} \rho_{0 }\left|\bfu_{0 }\right|^2+\frac{1}{2}\Ga^2\left(\nabla \phi_{0 }\right)+\rho_{0 } F_{\varepsilon}\left(\phi_{0 }\right) \mathrm{d} x:=E_0^{\varepsilon}.\label{energy_epsilon}
\end{aligned}
\eq
Since $\|\phi_0\|_{L^{\infty}} \leq 1$, and according to \eqref{F_propriety}, we have
\bq
E^{\varepsilon}_0 \leq \int_{\Omega} \frac{1}{2} \rho_{0} |u_{0}|^2 + \frac{1}{2}\Ga^2(\nabla\phi_{0}) + \rho_{0} F(\phi_{0}) \md x := E_0,\label{E_epsi<E_0}
\eq
{ 
Consequently, there exists a constant $C = C(\rho_*, \nu_*,D_*,r)$, but not depend on $\varepsilon$, such that
\bal
&\|\bfu^{\varepsilon}\|_{L^{\infty}(0,T;\bfH)}\leq C\sqrt{E_0},\quad \|\bfu^{\varepsilon}\|_{L^2(0,T;\bfV)}\leq C\sqrt{E_0},\label{u_L2_H_V}\quad\\
&\|\nabla\phi^{\varepsilon}\|_{L^{\infty}(0,T;L^2(\Omega))}\leq C\sqrt{E_0},
\quad\|\nabla\mu^{\varepsilon}\|_{L^2(0,T;L^2(\Omega))}\leq C\sqrt{E_0}.\label{mu_L2_2}
\eal
}
{  Thus, the inequalities \eqref{rho_f}, \eqref{phi_L2H1} and \eqref{phi_f} follows easily. For the equation \eqref{mu_L2H1}, from \eqref{mu_L2_2}, we have
\[
\|\na \mu^{\varepsilon}\|_{L^2(0,T;L^2(\Om))}\leq C\sqrt{E_0},
\]
By following the same lines of \cite[page 223]{giorgini2020weak} we obtain
\bq
\| \mu^{\varepsilon}\|_{L^2(0,T;H^1(\Om))}\leq C(T).
\eq
}

For the last inequality \eqref{u_f}, proceeding as in \eqref{time_translation}, we have
\bq
\begin{aligned}\label{time_translation_2}
 \int_{\Omega} \rho^{\varepsilon}(t+h)&\left|\bfu^{\varepsilon}(t+h)-\bfu^{\varepsilon}(t)\right|^2 \md x
 =\\ 
&\underbrace{\int_{\Omega}-\left(\rho^{\varepsilon}(t+h)-\rho^{\varepsilon}(t)\right) \bfu^{\varepsilon}(t) \cdot\left(\bfu^{\varepsilon}(t+h)-\bfu^{\varepsilon}(t)\right) \mathrm{d} x}_{J_1(t)} \\
& +\underbrace{\int_t^{t+h} \int_{\Omega}-\operatorname{div}\left(\rho^{\varepsilon}(\tau) \bfu^{\varepsilon}(\tau) \otimes \bfu^{\varepsilon}(\tau)\right) \cdot\left(\bfu^{\varepsilon}(t+h)-\bfu^{\varepsilon}(t)\right) \mathrm{d} x \md \tau}_{J_2(t)} \\
& +\underbrace{\int_t^{t+h} \int_{\Omega}-\nu\left(\phi^{\varepsilon}(\tau)\right) \mathbb{D} \bfu^{\varepsilon}(\tau): \nabla\left(\bfu^{\varepsilon}(t+h)-\bfu^{\varepsilon}(t)\right) \mathrm{d} x \md \tau}_{J_3(t)} \\
& +\underbrace{\int_t^{t+h} \int_{\Omega} \rho^{\varepsilon}(\tau) \mu^{\varepsilon}(\tau) \nabla \phi^{\varepsilon}(\tau) \cdot\left(\bfu^{\varepsilon}(t+h)-\bfu^{\varepsilon}(t)\right) \mathrm{d} x \md \tau}_{J_4(t)} \\
& +\underbrace{\int_t^{t+h} \int_{\Omega}-\rho^{\varepsilon}(\tau) F_{\varepsilon}^{\prime}\left(\phi^{\varepsilon}(\tau)\right) \nabla \phi^{\varepsilon}(\tau) \cdot\left(\bfu^{\varepsilon}(t+h)-\bfu^{\varepsilon}(t)\right) \mathrm{d} x \md \tau}_{J_5(t)}. 
\end{aligned}
\eq
Same here, $\int_{0}^{T-h} |J_k(t)| \md t \leq C h^{\frac{1}{2}}$ for $k=1,2,3\text{ and } 4$, with $C$ is a constant independent of $\varepsilon$. Thus, it is sufficient to control $J_5(t)$ by bounding $\left\|F_{\varepsilon}^{\prime}\left(\phi^{\varepsilon}\right)\right\|_{L^2(0,T;{L^6(\Omega)})}$ independently of $\varepsilon$. To this end, we show that
$$\left\|G_{\varepsilon}^{\prime}\left(\phi^{\varepsilon}\right)\right\|_{L^2(0,T;{L^6(\Omega)})} \leq C.$$
where $C$ is a constant independent of $\varepsilon$.
The argument is inspired from \cite{Abel}, we define for all $k>1$
$$
\phi_k^{\epsilon}=h_k \circ \phi^{\varepsilon}, \quad h_k(s)= \begin{cases}1-\frac{1}{k} & s>1-\frac{1}{k}, \\ s & -1+\frac{1}{k} \leq s \leq 1-\frac{1}{k}, \\ -1+\frac{1}{k} & s<-1+\frac{1}{k} .\end{cases}
$$

The chain rule holds $\nabla \phi_k^{\varepsilon}=\nabla \phi^{\varepsilon} \chi_{\left[-1+\frac{1}{k}, 1-\frac{1}{k}\right]}(\phi^{\varepsilon})$.

Now, multiplying \eqref{muepsilon} by $\left|G_{\varepsilon}^{\prime}\left(\phi_k^{\varepsilon}\right)\right|^{4} G_{\varepsilon}^{\prime}\left(\phi_k^{\varepsilon}\right)$ and integrating over $\Omega$, we find
$$
\begin{aligned}
\int_{\Omega}\left|G_{\varepsilon}^{\prime}\left(\phi_k^{\varepsilon}\right)\right|^{4} G_{\varepsilon}^{\prime \prime}\left(\phi_k^{\varepsilon}\right) \Ga\bfxi(\nabla \phi^{\varepsilon}) \cdot \nabla \phi_k^{\varepsilon} \md x &+\int_{\Omega} \rho^{\varepsilon}\left|G_{\varepsilon}^{\prime}\left(\phi_k^{\varepsilon}\right)\right|^{4} G_{\varepsilon}^{\prime}\left(\phi_k^{\varepsilon}\right) G_{\varepsilon}^{\prime}(\phi^{\varepsilon}) d x \\
&=\int_{\Omega}\left(\rho^{\varepsilon} \mu^{\varepsilon}+\lambda_1 \rho^{\varepsilon}\phi^{\varepsilon}\right)\left|G_{\varepsilon}^{\prime}\left(\phi_k^{\varepsilon}\right)\right|^{4} G_{\varepsilon}^{\prime}\left(\phi_k^{\varepsilon}\right) d x .
\end{aligned}
$$

According to the assumption $\textbf{H}_3$ and the fact that $G_{\varepsilon}$ is strictly convex, the first term on the left-hand side is non-negative. We also have that $G_{\varepsilon}^{\prime}\left(\phi_k^{\varepsilon}\right)^2 \leq G_{\varepsilon}^{\prime}(\phi^{\varepsilon}) G_{\varepsilon}^{\prime}\left(\phi_k^{\varepsilon}\right)$ almost everywhere. Thus, by Young's inequality, we obtain
$$
\left\|G_{\varepsilon}^{\prime}\left(\phi_k^{\varepsilon}\right)\right\|_{L^6(\Omega)}^6 \leq C\left\|\rho^{\varepsilon} \mu^{\varepsilon}+ \lambda_1\rho^{\varepsilon}\phi^{\varepsilon}\right\|_{L^6(\Omega)}^6,
$$
From  \eqref{phi_L2H1}, \eqref{mu_L2H1} and \eqref{density_bounds} we get $\rho^{\varepsilon} \mu^{\varepsilon}+ \lambda_1\rho^{\varepsilon}\phi^{\varepsilon}\in {L^2(0,T;{L^6(\Omega)})}$. Then, there exist a constant $C:=C(\rho_*,\rho^*,E_0,\lambda_1)$ independent of $\varepsilon$ and $k$ such that 
$$\left\|G_{\varepsilon}^{\prime}\left(\phi_k^{\varepsilon}\right)\right\|_{L^2(0,T;{L^6(\Omega)})} \leq C(T)$$ By applying Fatou's lemma, we have 
$$\left\|G_{\varepsilon}^{\prime}\left(\phi^{\varepsilon}\right)\right\|_{L^2(0,T;{L^6(\Omega)})} \leq C(T)$$
Since $F^{\prime} (\phi^{\varepsilon}) = -\lambda_1 \phi^{\varepsilon} + G^{\prime}_{\varepsilon}(\phi^{\varepsilon})$, we deduce that $\left\|F_{\varepsilon}^{\prime}\left(\phi^{\varepsilon}\right)\right\|_{L^2(0,T;{L^6(\Omega)})}$ bounded independently of $\varepsilon$, i.e
\bq
\left\|F_{\varepsilon}^{\prime}\left(\phi^{\varepsilon}\right)\right\|_{L^2(0,T;{L^6(\Omega)})} \leq C(T).\label{F_L2_L6}
\eq
{  

Recalling that \eqref{muepsilon} holds almost everywhere, we multiply this equation
by $\Delta \phi^\varepsilon $ and integrate over $\Omega$.
After integrating by parts and using the boundary condition for $\phi_\varepsilon$,
we find
\[
\int_\Om\operatorname{div}\left(\Ga\bfxi(\nabla\phi^{\varepsilon})\right)\De\phi^{\varepsilon} \md x= -\int_\Om\rho^{\varepsilon}\mu^{\varepsilon}\De\phi^{\varepsilon}\md x  + \int_\Om\rho^{\varepsilon} F_{\varepsilon}^{\prime}(\phi^{\varepsilon})\De\phi^{\varepsilon}\md x,
\]
thus,
\[
a_\Ga \int_{\Omega} \lvert D^2 \phi^{\varepsilon} \rvert^2
\le  \rho^* \|\mu^{\varepsilon}\|_{L^2(\Om)}\|\De\phi^{\varepsilon}\|_{L^2(\Om)}+\rho^*\left(\int_\Om |F_{\varepsilon}^{\prime}(\phi^{\varepsilon})|^2\,\md x\right)^{1/2}\|\De \phi^{\varepsilon}\|_{L^2(\Om)}
\]
According to \eqref{F_L2_L6} in particularly we have $\left\|F_{\varepsilon}^{\prime}\left(\phi^{\varepsilon}\right)\right\|_{L^2(0,T;{L^2(\Omega)})} \leq C(T)$.
We deduce that 
\bq
\|\phi^{\varepsilon}\|_{L^2(0,T;H^2(\Om))} \le C(T).\label{phi_L2_H2_epsi}
\eq

Again, by \cite[Lemma 3.1]{giorgini2020weak}, together with \eqref{phi_L2_H2_epsi}-- \eqref{phi_f} and \eqref{u_f}--\eqref{u_L2_H_V} we have 

\begin{align}
\bfu^{\varepsilon} &\to \bfu
&& \text{strongly in } L^2\bigl(0,T; L^q(\Omega)\bigr),
\quad \forall\, q \in [2,6),  \\[0.5em]
\phi^{\varepsilon} &\to \phi
&& \text{strongly in } L^2\bigl(0,T; W^{1,q}(\Omega)\bigr),
\quad \forall\, q \in [2,6), \\[0.5em]
\phi^{\varepsilon} &\to \phi 
&& \text{strongly in } C\bigl([0,T]; L^q(\Omega)\bigr),
\quad \forall\, q \in [2,6).
\end{align}

}
\end{proof}
{ 

{ 
By recalling the proposition \ref{prop12}, { same as \eqref{pass_limit}, we have  
\bq
\Gamma \bfxi(\nabla \phi^{\varepsilon}) \otimes \nabla \phi^{\varepsilon} \rightarrow \Gamma \bfxi(\nabla \phi^{}) \otimes \nabla \phi^{}  \text{ strongly in }  L^1(0,T; L^2(\Omega)).\eq
}
We thus conclude that there exist functions $(\rho,\bfu,\phi,\mu)$ satisfying}
}
\bal
& \rho \in \mathcal{C}([0,T];L^2(\Omega))\cap L^{\infty}(\Omega \times(0, T)) \cap W^{1, \infty}\left(0, T ; H^{-1}(\Omega)\right),\\
& \bfu \in L^2\left(0, T ; \bfV\right) \cap B_{2, \infty}^{\frac{1}{4}}\left(0, T ; \bfH\right), \\
& \phi \in  { L^{\infty}\left(0, T ; H^1(\Omega)\right)} \cap B_{\infty, \infty}^{\frac{1}{4}}\left(0, T ; L^2(\Omega)\right), \\
& \mu \in {
 L^{2}\left(0, T ; H^1(\Omega)\right)}.
\eal
and we thus complete the proof of Theorem \ref{th1}.

\section*{Declarations}


\textbf{Conflict of interest}: The authors have not disclosed any competing interests.






\begin{appendices}

\section{Bihari's Inequality}\label{A:Bihari's}

\begin{lemma}
Let $f: [0, +\infty) \to [0, +\infty)$ be a non-decreasing continuous function such that $f > 0$ on $]0, +\infty)$ and $\int_1^{+\infty} \frac{1}{f(x)} \, dx < +\infty$. We denote by $F$ the anti-derivative of $-1/f$ that vanishes at $+\infty$. Let $y$ be a continuous nonnegative function on $[0, +\infty)$, and let $g$ be a nonnegative function in $L_{loc}^1([0, +\infty))$. We assume that there exists a $y_0 > 0$ such that for all $t \geq 0$, we have the inequality

\[
y(t) \leq y_0 + \int_0^t g(s) \, ds + \int_0^t f(y(s)) \, ds.
\]

Then, there exists a unique $T^*$ that satisfies the equation
\[
T^* = F\left(y_0 + \int_0^{T^*} g(s) \, ds\right),
\]
and for any $T < T^*$, we have
\[
\sup_{t \leq T} y(t) \leq F^{-1}\left(F\left(y_0 + \int_0^T g(s) \, ds\right) - T\right).
\]
\end{lemma}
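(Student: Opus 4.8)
The plan is to reduce the integral inequality to an autonomous differential inequality for a sharp majorant, then integrate after composing with $F$. First I would set $G(t) = y_0 + \int_0^t g(s)\,ds$, which is continuous, non-decreasing (because $g \ge 0$) and satisfies $G(0) = y_0 > 0$. Fixing $T < T^*$ once and for all, for $t \in [0,T]$ the hypothesis together with $G(t) \le G(T)$ gives $y(t) \le G(T) + \int_0^t f(y(s))\,ds =: v(t)$. The function $v$ is $C^1$ on $[0,T]$ (since $f\circ y$ is continuous), $v(0) = G(T) \ge y_0 > 0$, $v$ is non-decreasing, and $y(t) \le v(t)$ on $[0,T]$; hence, since $f$ is non-decreasing, $v'(t) = f(y(t)) \le f(v(t))$.

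Next I would exploit that $F(x) = \int_x^\infty \frac{ds}{f(s)}$ is $C^1$ and strictly decreasing on $(0,\infty)$ with $F' = -1/f$ (the finiteness of $F(x)$ for $x>0$ follows from $\int_1^\infty 1/f < \infty$ together with the continuity and positivity of $f$ on compact subsets of $(0,\infty)$). Since $v(t) > 0$, the chain rule applies and
\[
\frac{d}{dt} F(v(t)) = -\frac{v'(t)}{f(v(t))} = -\frac{f(y(t))}{f(v(t))} \ge -1 .
\]
Integrating over $[0,t]$ yields $F(v(t)) \ge F(v(0)) - t = F(G(T)) - t \ge F(G(T)) - T$ for every $t \in [0,T]$. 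Because for $T < T^*$ the right-hand side is strictly positive (shown below), it lies in the range of $F$, so applying the strictly decreasing inverse $F^{-1}$ reverses the inequality: $y(t) \le v(t) \le F^{-1}\!\big(F(G(T)) - T\big)$ for all $t \in [0,T]$. Taking the supremum over $t \le T$ gives the asserted bound with $G(T) = y_0 + \int_0^T g(s)\,ds$.

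It remains to settle the existence and uniqueness of $T^*$ and the sign of $F(G(T)) - T$. I would introduce $\Phi(T) := F(G(T)) - T$ on $[0,\infty)$. Since $G$ is continuous, non-decreasing, and $\ge y_0 > 0$, and $F$ is continuous and decreasing on $(0,\infty)$, the map $T \mapsto F(G(T))$ is continuous and non-increasing, so $\Phi$ is continuous and \emph{strictly} decreasing (owing to the $-T$ term). Moreover $\Phi(0) = F(y_0) \in (0,\infty)$ and, using $F \ge 0$, one has $\Phi(T) \le F(y_0) - T \to -\infty$. By the intermediate value theorem there is a unique $T^*$ with $\Phi(T^*) = 0$, i.e. $T^* = F\big(y_0 + \int_0^{T^*} g(s)\,ds\big)$, and $\Phi(T) > 0$ precisely for $T < T^*$, which is exactly what was used above.

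The main obstacle is not the differential-inequality manipulation, which is routine, but obtaining the \emph{sharp} majorant: one must freeze $G$ at its endpoint value $G(T)$ on $[0,T]$ rather than keep the time-dependent $G(t)$, so that the comparison function $v$ satisfies the autonomous inequality $v' \le f(v)$ with the clean initial value $v(0) = y_0 + \int_0^T g$; a naive Gr\"onwall-type estimate would only yield a weaker constant. A secondary point requiring care is the bookkeeping of domains — checking that $F$ is only ever evaluated at points bounded away from $0$ (guaranteed by $v(t) \ge y_0 > 0$) and that $F(G(T)) - T$ stays in the range of $F$ so that $F^{-1}$ is legitimately applicable — which is precisely where the restriction $T < T^*$ enters.
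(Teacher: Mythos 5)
Your proof is correct. Note that the paper itself gives no argument for this lemma --- it is stated in Appendix A with a bare citation to Bihari (1956) --- so there is no in-paper proof to match; your comparison-function argument (freeze $G$ at its endpoint value $G(T)$, majorize $y$ by the $C^1$ function $v(t)=G(T)+\int_0^t f(y(s))\,ds$, derive $\frac{d}{dt}F(v(t))\ge -1$, integrate, and invert $F$) is the standard route to exactly this statement, and it cleanly delivers both the definition of $T^*$ via the strictly decreasing map $\Phi(T)=F\bigl(y_0+\int_0^T g\bigr)-T$ and the bound for $T<T^*$. Two cosmetic remarks: (i) strict positivity of $F(G(T))-T$ alone places it in the range of $F$ only when $F(0^+)=+\infty$; in general you should add the (immediate) observation that $F(G(T))-T< F(G(T))\le F(y_0)<F(0^+)$, since the range of $F$ on $(0,\infty)$ is $\bigl(0,F(0^+)\bigr)$; (ii) the estimate $\Phi(T)\le F(y_0)-T\to-\infty$ follows from $G(T)\ge y_0$ and the monotonicity of $F$, not from $F\ge 0$ as you wrote --- the conclusion is right, but the stated reason is off.
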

\cite[see page 90]{bihari1956generalization}.




\end{appendices}


\section*{Acknowledgments}

This work was funded by UM6P. The authors are deeply grateful to Professor Andrea Giorgini
(Department of Mathematics, Polytechnic University of Milan) for his insightful suggestions and invaluable guidance, which greatly contributed to this work. A. Zaidni also sincerely thanks Professor Alexis Vasseur (University of Texas at Austin) for his valuable discussions and suggestions.


\begin{thebibliography}{0}

\bibitem{Abel}
H. Abels, On a diffuse interface model for two-phase flows of viscous, incompressible fluids with matched densities, {\it Arch. Ration. Mech. Anal.} {\bf 194} (2009) 463--506.

\bibitem{abels2013}
H. Abels, D. Depner and H. Garcke, On an incompressible Navier--Stokes/Cahn--Hilliard system with degenerate mobility, {\it Ann. Inst. H. Poincar\'e C} {\bf 30} (2013) 1175--1190.

\bibitem{abels2024}
H. Abels, H. Garcke and A. Giorgini, Global regularity and asymptotic stabilization for the incompressible Navier--Stokes--Cahn--Hilliard model with unmatched densities, {\it Math. Ann.} {\bf 389} (2024) 1267--1321.

\bibitem{anderson98}
D. M. Anderson, G. B. McFadden and G. B. Wheeler, Diffuse-interface methods in fluid mechanics, {\it Annu. Rev. Fluid Mech.} {\bf 30} (1998) 139--165.

\bibitem{anderson2000phase}
D. M. Anderson, G. B. McFadden and G. B. Wheeler, A phase-field model of solidification with convection, {\it Physica D} {\bf 135} (2000) 175--194.

\bibitem{bihari1956generalization}
I. Bihari, A generalization of a lemma of Bellman and its application to uniqueness problems of differential equations, {\it Acta Math. Hung.} {\bf 7} (1956) 81--94.

\bibitem{cahn1958free}
J. W. Cahn and J. E. Hilliard, Free energy of a nonuniform system. I. Interfacial free energy, {\it J. Chem. Phys.} {\bf 28} (1958) 258--267.

\bibitem{Chen}
Y. Chen, Q. He, M. Mei and X. Shi, Asymptotic stability of solutions for 1-D compressible Navier-Stokes-Cahn-Hilliard system, {\it J. Math. Anal. Appl.} {\bf 467} (2018) 185--206.

\bibitem{Cherfils}
L. Cherfils, E. Feireisl, A. Miranville and M. Paicu, The compressible Navier-Stokes-Cahn-Hilliard equations with dynamic boundary conditions, {\it Math. Models Methods Appl. Sci.} {\bf 29} (2019) 2557--2584.

\bibitem{cherfils2019compressible}
L. Cherfils, E. Feireisl, M. Mich\'alek, A. Miranville, M. Petcu and D. Pra\v{z}\'ak, The compressible Navier--Stokes--Cahn--Hilliard equations with dynamic boundary conditions, {\it Math. Models Methods Appl. Sci.} {\bf 29} (2019) 2557--2584.

\bibitem{dilmi2025}
M. Dilmi, M. Kirane, M. Dilmi and H. Benseridi, Asymptotic behavior of Navier-Stokes-Voigt equations in a thin domain with damping term and Tresca friction law, {\it J. Hyperbolic Differ. Equ.} {\bf 22} (2025) 591--612.

\bibitem{dingwell}
S. Ding and Y. Li, Well-posedness for 1D Compressible Navier-Stokes/Cahn-Hilliard system.

\bibitem{dlotko2022navier}
T. Dlotko, Navier-Stokes-Cahn-Hilliard system of equations, {\it J. Math. Phys.} {\bf 63} (2022) 111511.

\bibitem{elbar2024}
C. Elbar and A. Poulain, Analysis and numerical simulation of a generalized compressible Cahn--Hilliard--Navier--Stokes model with friction effects, {\it ESAIM: M2AN} {\bf 58} (2024) 1989--2034.

\bibitem{Garcke}
H. Garcke, P. Knopf and A. Signori, The anisotropic Cahn--Hilliard equation with degenerate mobility: existence of weak solutions, {\it Anal. Appl.} (2025) 1--23.

\bibitem{garcke2023}
H. Garcke, P. Knopf and J. Wittmann, The anisotropic Cahn--Hilliard equation: regularity theory and strict separation properties, arXiv:2305.18255 (2023).

\bibitem{giorgini2023existence}
A. Giorgini, A. Ndongmo Ngana, T. T. Medjo and R. Temam, Existence and regularity of strong solutions to a nonhomogeneous Kelvin-Voigt-Cahn-Hilliard system, {\it J. Differential Equations} {\bf 372} (2023) 612--656.

\bibitem{giorgini2020weak}
A. Giorgini and R. Temam, Weak and strong solutions to the nonhomogeneous incompressible Navier-Stokes-Cahn-Hilliard system, {\it J. Math. Pures Appl.} {\bf 144} (2020) 194--249.


\bibitem{giorgini2021navier}
A. Giorgini, R. Temam and X.-T. Vu, The Navier-Stokes-Cahn-Hilliard equations for mildly compressible binary fluid mixtures, {\it Discrete Contin. Dyn. Syst. Ser. B} {\bf 26} (2021).

\bibitem{Jiaojiao}
J. Pan, C. Xu and H. Liu, Uniform regularity of the weak solution to higher-order Navier-Stokes-Cahn-Hilliard systems, {\it J. Math. Anal. Appl.} {\bf 486} (2020).

\bibitem{kotschote2015strong}
M. Kotschote and R. Zacher, Strong solutions in the dynamical theory of compressible fluid mixtures, {\it Math. Models Methods Appl. Sci.} {\bf 25} (2015) 1217--1256.

\bibitem{li2024strong}
Y. Li, Y. Wang and H. Cai, Strong solutions of an incompressible phase-field model with variable density, {\it AMASES} (2024).

\bibitem{munteanu2024}
I. Munteanu, Well-posedness for the Cahn-Hilliard-Navier-Stokes equations perturbed by gradient-type noise, {\it Appl. Math. Optim.} {\bf 89} (2024).

\bibitem{Pierluigi}
P. Colli, S. Frigeri and M. Grasselli, Global existence of weak solutions to a nonlocal Cahn--Hilliard--Navier--Stokes system, {\it J. Math. Anal. Appl.} {\bf 386} (2012) 428--444.

\bibitem{rui2024global}
N. Rui, F. Li and Z. Guo, Global well-posedness of the nonhomogeneous incompressible Navier-Stokes-Cahn-Hilliard system with Landau potential, arXiv:2409.11775 (2024).

\bibitem{taylor-cahn98}
J. E. Taylor and J. W. Cahn, Diffuse interfaces with sharp corners and facets: Phase field models with strongly anisotropic surfaces, {\it Physica D} {\bf 112} (1998) 194--249.



\bibitem{temam2012infinite}
R. Temam, {\it Infinite-Dimensional Dynamical Systems in Mechanics and Physics} Springer-Verlag, New York, (1997).

\bibitem{tzavaras2024}
A. E. Tzavaras, Oscillations in compressible Navier-Stokes and homogenization in phase transition problems, {\it J. Hyperbolic Differ. Equ.} {\bf 21} (2024) 827--843.

\bibitem{zaidni2024}
A. Zaidni, P. J. Morrison and S. Benjelloun, Thermodynamically consistent Cahn-Hilliard-Navier-Stokes equations using the metriplectic dynamics formalism, {\it Physica D} {\bf 468} (2024) 134--303.

\bibitem{Zhao}
X. Zhao, Strong solutions to the density-dependent incompressible Cahn-Hilliard-Navier-Stokes system, {\it J. Hyperbolic Differ. Equ.} {\bf 16} (2019) 701--742.

\end{thebibliography}
\end{document}